\theoremstyle{plain}
\newtheorem{thm}{Theorem}[section]
\newtheorem{prop}[thm]{Proposition}
\newtheorem{cor}[thm]{Corollary}
\newtheorem{lem}[thm]{Lemma}
\newtheorem{main}{Main Theorem}
\theoremstyle{remark}
\theoremstyle{definition}
\newcommand{\I}{\mathbf{I}}                       
\newcommand{\sph}{\mathbf{S}}                     
\newcommand{\ball}{\mathbf{B}}                     
\newcommand{\diam}{\operatorname{diam}}           
\newcommand{\fin}{\operatorname{Fin}}             
\newcommand{\cpt}{\operatorname{Comp}}             
\title[On a hyperspace of compact subsets]{On a hyperspace of compact subsets which is homeomorphic to a non-separable Hilbert space}
\author[K.~Koshino]{Katsuhisa Koshino}
\thanks{This work was supported by JSPS KAKENHI Grant Number 15K17530}
\address{Faculty of Engineering, Kanagawa University, Yokohama, 221-8686, Japan}
\email{pt120401we@kanagawa-u.ac.jp}
\subjclass[2010]{Primary: 54B20, Secondary: 54F65, 57N20}
\keywords{hyperspace, the Vietoris topology, Hilbert space, the Hausdorff metric}
\begin{document}

\begin{abstract}
Let $X$ be a metrizable space and $\cpt(X)$ be the hyperspace consisting of non-empty compact subsets of $X$ endowed with the Vietoris topology.
In this paper, we give a necessary and sufficient condition on $X$ for $\cpt(X)$ to be homeomorphic to a non-separable Hilbert space.
Moreover, we consider the topological structure of pair $(\cpt(\overline{X}),\fin(X))$ of hyperspaces of $X$ and its completion $\overline{X}$,
 where $\fin(X)$ is the hyperspace of non-empty finite sets in $X$.
\end{abstract}

\maketitle

\section{Introduction}

Throughout this paper, spaces are metrizable, maps are continuous and $\kappa$ is an infinite cardinal.
Given a space $X$, let $\cpt(X)$ be the hyperspace of non-empty compact sets in $X$ with the Vietoris topology.
The hyperspace $\cpt(X)$ is a classical object and has been studied in infinite-dimensional topology, see, for instance, \cite{Cu1,Cu2}.
D.W.~Curtis \cite{Cu1} gave a necessary and sufficient condition on $X$ for $\cpt(X)$ to be homeomorphic to the separable Hilbert space as follows:

\begin{thm}\label{l2}
A space $X$ is separable, connected, locally connected, topologically complete and nowhere locally compact if and only if $\cpt(X)$ is homeomorphic to the separable Hilbert space.
\end{thm}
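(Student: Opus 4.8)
The plan is to treat the two implications separately: for necessity I would read the required properties of $X$ off those of the separable Hilbert space $\ell^2$, and for sufficiency I would verify the hypotheses of Toru\'nczyk's topological characterization of $\ell^2$. Throughout I would exploit the natural embedding $X\hookrightarrow\cpt(X)$, $x\mapsto\{x\}$, onto the closed subspace of singletons, together with the classical correspondences between topological properties of $X$ and of $\cpt(X)$ in the Vietoris topology.

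For necessity, suppose $\cpt(X)\cong\ell^2$. Since $\ell^2$ is separable, connected, locally connected, topologically complete, and nowhere locally compact, I would transfer each property back to $X$. Separability and topological completeness pass to $X$ because the singletons form a closed copy of $X$ (a closed subspace of a Polish space is Polish); connectedness and local connectedness of $X$ follow from the standard Vietoris-topology equivalences (of Wojdys\l awski--Curtis type), which assert that $\cpt(X)$ is connected, respectively locally connected, if and only if $X$ is. For nowhere local compactness I would argue contrapositively: if some $x\in X$ had a compact neighborhood $K$, then the collection of compacta contained in the interior of $K$ would be a nonempty open subset of $\cpt(X)$ whose closure lies in the compact set $\cpt(K)$, so $\cpt(X)$ would be locally compact at such a point, contradicting that $\ell^2$ is nowhere locally compact.

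For sufficiency, assume $X$ is separable, connected, locally connected, topologically complete, and nowhere locally compact; the goal is $\cpt(X)\cong\ell^2$. I would check that $\cpt(X)$ is a separable, completely metrizable \emph{AR} satisfying the \emph{discrete approximation property}, so that Toru\'nczyk's characterization applies. Separability and topological completeness of $\cpt(X)$ follow from the corresponding properties of $X$, since the Hausdorff metric induced by a complete metric on $X$ yields a complete metric inducing the Vietoris topology. That $\cpt(X)$ is an AR is the Wojdys\l awski--Curtis theorem, obtained from connectedness and local connectedness of $X$: local connectedness supplies, via the Hahn--Mazurkiewicz theorem, arcs joining points inside small sets, and the union operation on compacta furnishes the convex-like structure needed to build contractions and local ARs. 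The decisive step is the discrete approximation property: given a sequence of maps $f_n\colon I^n\to\cpt(X)$ and an open cover, one must approximate them by maps whose images form a discrete family. Here nowhere local compactness supplies exactly the room needed---near any compact set there is always ambient space into which tiny disjoint perturbations can be pushed---so that the images can be separated within the prescribed cover. Once this property is established, Toru\'nczyk's theorem yields $\cpt(X)\cong\ell^2$.

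The main obstacle I anticipate is verifying the discrete approximation property. Producing approximating maps with discrete images requires a simultaneous, uniformly controlled perturbation of countably many maps in the Vietoris topology, using nowhere local compactness to create the necessary separation while remaining within the given cover; this is the technical heart of the argument. A secondary difficulty is the explicit verification that $\cpt(X)$ is an AR, which depends on handling the semilattice/convex structure of the hyperspace carefully rather than on any soft general principle.
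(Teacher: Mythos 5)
Your plan is correct and follows essentially the same route as the paper: the paper does not reprove this statement but cites Curtis \cite{Cu1}, whose argument (and the paper's own non-separable generalization via Propositions~\ref{dens.}, \ref{compl.}, \ref{ar}, \ref{lfap} and \ref{dcp}) is exactly your scheme --- for necessity, transferring properties through the singleton embedding together with the compactness of $\cpt(K)$ for compact $K$, and for sufficiency, verifying Toru\'nczyk's characterization using completeness of the Hausdorff metric, the Wojdys{\l}awski--Curtis AR theorem, and the discrete approximation property. The technical heart you flag is carried out in the paper (Propositions~\ref{lfap} and \ref{dcp}) by precisely the mechanism you sketch: simplicial approximation plus controlled insertion of points from locally finite discrete sets, which nowhere local compactness supplies, into the images near each vertex.
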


We denote the Hilbert space of density $\kappa$ by $\ell_2(\kappa)$.
For a space $X$, an \textit{$X$-manifold} is a topological manifold modeled on $X$.
In the non-separable case, combining the result of \cite{Yag} with the open embedding theorem and the classification theorem of Hilbert manifolds, refer to \cite{HS},
 we can establish the following:

\begin{thm}
If a space $X$ is a connected $\ell_2(\kappa)$-manifold,
 then $\cpt(X)$ is homeomorphic to $\ell_2(\kappa)$.
\end{thm}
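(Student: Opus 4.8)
The plan is to show that $\cpt(X)$ is a connected, contractible $\ell_2(\kappa)$-manifold and then to read off $\cpt(X)\cong\ell_2(\kappa)$ from the classification theorem. The three cited ingredients enter as follows: the result of \cite{Yag} supplies the model case $\cpt(\ell_2(\kappa))\cong\ell_2(\kappa)$, the open embedding theorem lets us regard $\cpt(X)$ as an open piece of this model, and the classification theorem of \cite{HS} converts homotopy information into a homeomorphism.

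First I would equip $\cpt(X)$ with a manifold structure. By the open embedding theorem for Hilbert manifolds, the connected $\ell_2(\kappa)$-manifold $X$ is homeomorphic to an open subset of $\ell_2(\kappa)$, and I identify $X$ with such an open set. For any $X\subseteq Y$ the inclusion induces an embedding of $\cpt(X)$ onto the subspace $\{K\in\cpt(Y):K\subseteq X\}$ of $\cpt(Y)$, and when $X$ is open in $Y$ this image is the basic Vietoris-open set $\langle X\rangle$. Taking $Y=\ell_2(\kappa)$ and using \cite{Yag} to replace $\cpt(\ell_2(\kappa))$ by $\ell_2(\kappa)$, I conclude that $\cpt(X)$ is homeomorphic to an open subset of $\ell_2(\kappa)$; in particular it is an $\ell_2(\kappa)$-manifold, hence an ANR.

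Next I would determine the homotopy type. Since $X$ is connected and, as a manifold, locally path-connected, $\cpt(X)$ is path-connected. To see that it is contractible I argue that it is weakly contractible and then use that a metrizable ANR is contractible as soon as it is weakly contractible. For weak contractibility, let $f\colon\sph^{n}\to\cpt(X)$ be any map; its image is compact, so the union $\widehat{K}=\bigcup_{s\in\sph^{n}}f(s)$ is a compact subset of $X$. As $X$ is connected, locally connected and completely metrizable, $\widehat K$ lies in some Peano subcontinuum $P\subseteq X$, and then $f$ factors through the embedded copy $\cpt(P)\subseteq\cpt(X)$. By the Curtis--Schori--West theorem $\cpt(P)=2^{P}$ is homeomorphic to the Hilbert cube, hence contractible, so $f$ is null-homotopic in $\cpt(P)$ and therefore in $\cpt(X)$. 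Thus every $\pi_{n}(\cpt(X))$ vanishes and $\cpt(X)$ is contractible.

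Finally I would apply the classification theorem of Hilbert manifolds (see \cite{HS}): homotopy-equivalent $\ell_2(\kappa)$-manifolds are homeomorphic, so a connected contractible $\ell_2(\kappa)$-manifold is homeomorphic to $\ell_2(\kappa)$. Combined with the two previous steps this yields $\cpt(X)\cong\ell_2(\kappa)$. I expect the main obstacle to lie in the contractibility step rather than in the formal manifold recognition: in the non-separable setting one cannot simply exhaust $X$ by Peano continua to build a single global contraction, so the argument must proceed through weak contractibility -- reducing each spherical map to the classical Hilbert-cube theorem on a compactum -- and then upgrade to genuine contractibility via the ANR property of $\cpt(X)$.
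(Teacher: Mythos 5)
Your argument is correct and is essentially the proof the paper has in mind (the paper states this theorem without proof, sketching it in one sentence of the introduction): identify $X$ with an open subset of $\ell_2(\kappa)$ by the open embedding theorem, observe that $\cpt(X)$ is then homeomorphic to the Vietoris-open subset $\{K \in \cpt(\ell_2(\kappa)) \mid K \subset X\}$ of $\cpt(\ell_2(\kappa)) \cong \ell_2(\kappa)$ (the result of \cite{Yag}), and finish with the Henderson--Schori classification \cite{HS} applied to a connected contractible $\ell_2(\kappa)$-manifold. The one step where you deviate is contractibility: the paper has this off the shelf, since by Proposition~\ref{ar} (Wojdys{\l}awski, Ta\v{s}metov) the hyperspace $\cpt(X)$ is an AR whenever $X$ is connected, locally connected and topologically complete, and an AR is contractible, so no homotopy-group computation is needed. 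Your detour through weak contractibility is nonetheless sound: a compact subfamily of $\cpt(X)$ has compact union, every compactum in a connected, locally connected, topologically complete space lies in a Peano subcontinuum (a fact used throughout Curtis's work \cite{Cu2}), and $\cpt(P)$ is the Hilbert cube by Curtis--Schori--West (or a singleton if $P$ degenerates to a point, which is equally harmless), after which the ANR property of the manifold $\cpt(X)$ upgrades weak contractibility to genuine contractibility via CW homotopy type. The trade-off is that you invoke the deep Curtis--Schori--West theorem at a point where the much older AR results of \cite{Wo,Ta} suffice; on the other hand, your version makes explicit why compactness of the image of a sphere rescues the argument in the non-separable setting, which the paper leaves implicit.
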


In this paper, we characterize a space $X$ whose hyperspace $\cpt(X)$ is homeomorphic to a Hilbert space of density $\kappa$ as follows:

\begin{main}
A space $X$ is connected, locally connected, topologically complete, nowhere locally compact, and for each point $x \in X$, any neighborhood of $x$ in $X$ is of density $\kappa$ if and only if $\cpt(X)$ is homeomorphic to $\ell_2(\kappa)$.
\end{main}

Let $\fin(X) \subset \cpt(X)$ be the hyperspace of non-empty finite subsets of a space $X$.
By $\ell_2^f(\kappa)$, we mean the linear subspace spanned by the canonical orthonormal basis of $\ell_2(\kappa)$.
D.W.~Curtis and N.T.~Nhu \cite{CN} characterized a space $X$ whose hyperspace $\fin(X)$ is homeomorphic to $\ell_2^f(\omega)$,
 and the author \cite{Kos8} generalized it as follows:

\begin{thm}
A space $X$ is connected, locally path-connected, strongly countable-dimensional, $\sigma$-locally compact and for every point $x \in X$, any neighborhood of $x$ in $X$ is of density $\kappa$ if and only if $\fin(X)$ is homeomorphic to $\ell_2^f(\kappa)$.
\end{thm}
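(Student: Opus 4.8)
The statement is a topological characterization, so I would prove the two implications separately, with the reverse (sufficiency) direction carrying essentially all of the weight. The organizing principle is that $\ell_2^f(\kappa)$ is the canonical strongly countable-dimensional absorbing set associated with $\ell_2(\kappa)$: it is an absolute retract (AR), it is strongly countable-dimensional, it is a countable union of closed, locally compact, finite-dimensional $Z$-sets (namely the sets $\{x : \card(\mathrm{supp}(x)) \le n\}$), every neighborhood of every point has density $\kappa$, and it is strongly universal for the class of finite-dimensional metric spaces of weight at most $\kappa$. Each of the five conditions imposed on $X$ is, under the canonical closed embedding $X \ni x \mapsto \{x\} \in \fin(X)$, exactly the trace of one of these structural features of the model. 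In particular one checks at the outset that $\ell_2^f(\kappa)$ itself satisfies all five listed conditions, which keeps the necessity direction honest.

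For the necessity direction I would assume $\fin(X) \cong \ell_2^f(\kappa)$ and read the conditions off the model. The map $x \mapsto \{x\}$ embeds $X$ as a closed subspace $\fin_1(X) \subset \fin(X)$ which is moreover a retract (send $F$ to a selected point), so topological invariants descend to $X$. Connectedness and local path-connectedness of $X$ follow from the corresponding properties of $\ell_2^f(\kappa)$ together with the classical hyperspace facts relating local connectivity of $\fin(X)$ to that of $X$; strong countable-dimensionality and $\sigma$-local compactness pass to the closed subspace $\fin_1(X)$ and hence to $X$; and the local density condition transfers because the local density of $\fin(X)$ at $\{x\}$ coincides with the local density of $X$ at $x$, so $\dens$-everywhere-$\kappa$ for $\fin(X)$ forces the same for $X$. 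This direction is routine once the structural facts about $\ell_2^f(\kappa)$ are in hand.

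For the sufficiency direction I would verify that $\fin(X)$ meets the intrinsic characterization of $\ell_2^f(\kappa)$ as a strongly countable-dimensional absorbing set and then invoke the corresponding uniqueness theorem. Writing $\fin(X) = \bigcup_{n} \fin_{\le n}(X)$ with $\fin_{\le n}(X) = \{F \in \fin(X) : \card F \le n\}$, I would proceed in four steps. First, $\fin(X)$ is an AR, by the classical criterion that the hyperspace of finite sets of a connected, locally path-connected space is an AR. Second, $\fin(X)$ is strongly countable-dimensional and $\sigma$-locally compact: each $\fin_{\le n}(X)$ is the symmetric product $X^n/\mathfrak{S}_n$, and the orbit map $q_n \colon X^n \to \fin_{\le n}(X)$ is a perfect surjection with fibers of cardinality at most $n!$, so strong countable-dimensionality and $\sigma$-local compactness transfer from the finite power $X^n$ to $\fin_{\le n}(X)$ and then to the countable union. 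Third, each $\fin_{\le n}(X)$ is a $Z$-set in $\fin(X)$, since any finite set can be approximated arbitrarily well by sets with strictly more points, using that $X$ has no isolated points (forced by the infinite-density hypothesis); this exhibits $\fin(X)$ as a countable union of finite-dimensional closed $Z$-sets. Fourth, $\fin(X)$ has the (strong) discrete approximation property and is strongly universal for finite-dimensional metric spaces of weight at most $\kappa$, while every point has only neighborhoods of density $\kappa$. Granting all four, the uniqueness of strongly countable-dimensional absorbing sets of density $\kappa$ yields $\fin(X) \cong \ell_2^f(\kappa)$; one may take the ambient space to be the completion $\overline{\fin(X)} = \cpt(\overline{X})$.

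The main obstacle is the fourth step, strong universality, in the non-separable setting. Concretely, one must show that every map $f \colon A \to \fin(X)$ from a finite-dimensional metric space $A$ with $\weight(A) \le \kappa$ can be approximated, within an arbitrary open cover, by a closed embedding whose image is a $Z$-set missing a prescribed $Z$-set. The idea is standard in outline: one replaces each value $f(a)$ by a finite set enlarged with many fresh, nearby, pairwise distinct points chosen inside local pieces of $X$. The difficulty is to carry this out $\kappa$-uniformly, so that the density bookkeeping (selecting near each point a discrete family of size $\kappa$ of new points, which is possible precisely because every neighborhood has density $\kappa$) and the control of the Hausdorff–Vietoris metric are maintained simultaneously across a transfinite, locally finite indexing, and so that the resulting maps are genuinely closed $Z$-embeddings uniformly over the cover. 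This is the technical heart of the argument; once strong universality is secured, the remainder is a direct appeal to the abstract characterization and uniqueness machinery for $\ell_2^f(\kappa)$.
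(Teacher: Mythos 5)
First, a point of comparison: this theorem is not proved in the present paper at all --- it is quoted from Curtis--Nhu \cite{CN} (the case $\kappa=\omega$) and from \cite{Kos8} (the general case), so your proposal can only be measured against the method of those sources and against the parallel arguments this paper gives for $\cpt(X)$. Measured that way, your overall strategy (verify an intrinsic characterization of $\ell_2^f(\kappa)$ for $\fin(X)$) is the right one, but the write-up has two genuine gaps and one outright error. The error is in the necessity direction: there is in general no retraction $\fin(X)\to\fin_1(X)$ of the form ``send $F$ to a selected point,'' because a continuous selection on finite subsets need not exist --- already $\fin_2(\R^2)$ admits no continuous selection (a selection on two-point subsets forces a weak orderability-type structure on the space, which $\R^2$, and likewise $\ell_2^f(\kappa)$, does not have). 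Fortunately the retraction is not needed: $\fin_1(X)$ is closed in $\fin(X)$, and strong countable-dimensionality and $\sigma$-local compactness are closed-hereditary, while connectedness, local path-connectedness and the local density condition transfer by the hyperspace facts you cite (the density transfer is exactly the argument of Proposition~3.2 of this paper). So that direction is repairable, but as written it is wrong.

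The sufficiency direction has the serious gaps. (a) Your closing appeal to ``uniqueness of strongly countable-dimensional absorbing sets'' with ambient space $\overline{\fin(X)}=\cpt(\overline{X})$ does not work as stated: for an arbitrary admissible completion $\overline{X}$, the hyperspace $\cpt(\overline{X})$ need not be homeomorphic to $\ell_2(\kappa)$ --- by the Main Theorem of this very paper that requires $\overline{X}$ to be connected, locally connected, nowhere locally compact with all local densities $\kappa$, none of which is automatic for a completion of a $\sigma$-locally compact $X$; and even when the ambient is Hilbert, Corollary~1.4 here shows $\fin(X)$ occupies the right position in $\cpt(\overline{X})$ only under the extra hypothesis that $\overline{X}\setminus X$ is locally non-separating, which is precisely the homotopy-density condition that absorbing-set uniqueness presupposes. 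The route actually taken in \cite{Kos8} avoids the ambient space entirely: it verifies the intrinsic, Toru\'nczyk-type characterization of $\ell_2^f(\kappa)$ established in \cite{Kos1} (ANR, strongly countable-dimensional, $\sigma$-locally compact, local densities $\kappa$, plus $\kappa$-parametrized discrete-cells and strong universality conditions), exactly parallel to how the present paper feeds $\cpt(X)$ into Theorem~2.1. (b) You explicitly defer the strong universality/discrete approximation step, but in the non-separable setting that step \emph{is} the theorem: it is the analogue of the constructions in Sections~4 and~5 here --- subdivision control via Lemmas~4.1 and~4.2, the locally finite discrete families $Z^i(n)$ extracted from nowhere-local-compactness (here: from the density hypothesis via Lemma~5.3), the vertex-by-vertex insertion of fresh points $z^i(v)$, and the skeleton-wise extension using the Curtis--Nhu map $r:\sigma\to\fin(\partial\sigma)$, together with the convergence argument showing local finiteness of the images. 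A proposal that names this as ``the technical heart'' and leaves it unexecuted is a plan for a proof, not a proof.
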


For spaces $X$ and $Y$, writing $(X,Y)$, we understand $Y$ is a subspace of $X$.
A pair $(X,Y)$ of spaces is homeomorphic to $(X',Y')$ if there exists a homeomorphism $f : X \to X'$ such that $f(Y) = Y'$.
A subset $A$ of a space $X$ is \textit{locally non-separating} in $X$ if for every non-empty connected open set $U$ in $X$, $U \setminus A$ is non-empty and connected.
As a corollary of the main theorem and the paper \cite{Kos8}, we can establish the following:

\begin{cor}\label{pair}
Let $X$ be a connected, locally path-connected, strongly countable-dimensional and $\sigma$-locally compact space such that for each point $x \in X$, any neighborhood of $x$ in $X$ is of density $\kappa$.
Suppose that $X$ has a locally connected and nowhere locally compact completion $\overline{X}$.
Then the pair $(\cpt(\overline{X}),\fin(X))$ is homeomorphic to $(\ell_2(\kappa),\ell_2^f(\kappa))$ if and only if $\overline{X} \setminus X$ is locally non-separating in $\overline{X}$.
\end{cor}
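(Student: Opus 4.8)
The plan is to recognize $\fin(X)$ as an absorbing set inside $\cpt(\overline{X})$ and then to invoke the uniqueness theorem for such absorbing sets in a non-separable Hilbert space, so that the entire problem reduces to a single ambient condition---homotopy density---which will be shown to be equivalent to the local non-separation hypothesis. First I would verify that $\overline{X}$ satisfies the hypotheses of the Main Theorem. It is connected (since $X$ is connected and dense), locally connected and nowhere locally compact by assumption, and topologically complete because it is a completion of a metric space. Moreover, since $X$ is dense in $\overline{X}$ and every neighborhood of a point of $X$ has density $\kappa$, every non-empty open subset of $\overline{X}$ has density $\kappa$. Hence the Main Theorem yields $\cpt(\overline{X}) \cong \ell_2(\kappa)$. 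At the same time $X$ satisfies the hypotheses of the theorem of \cite{Kos8}, giving $\fin(X) \cong \ell_2^f(\kappa)$; in particular $\fin(X)$ possesses the intrinsic features of the model absorbing set $\ell_2^f(\kappa)$: it is strongly universal for the relevant class of strongly countable-dimensional, $\sigma$-locally compact spaces and, writing $\fin(X) = \bigcup_{n} \operatorname{Fin}_{\le n}(X)$ for the subsets of at most $n$ points, it decomposes as a countable union of strongly countable-dimensional, $\sigma$-locally compact closed subspaces inherited from the corresponding structure of $X$.

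The crux of the argument is the following bridge, which I would isolate as a lemma: $\fin(X)$ is homotopy dense in $\cpt(\overline{X})$ if and only if $\overline{X} \setminus X$ is locally non-separating in $\overline{X}$. Granting this lemma, the two implications of the corollary follow quickly. For the ``if'' direction, local non-separation gives homotopy density; combined with the intrinsic strong universality and the $\sigma$-decomposition recorded above, this exhibits $\fin(X)$ as an absorbing set in $\cpt(\overline{X}) \cong \ell_2(\kappa)$, whence the uniqueness theorem for absorbing sets supplies an ambient homeomorphism $(\cpt(\overline{X}),\fin(X)) \cong (\ell_2(\kappa),\ell_2^f(\kappa))$. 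For the ``only if'' direction, a homeomorphism of pairs transports the homotopy density of $\ell_2^f(\kappa)$ in $\ell_2(\kappa)$ to homotopy density of $\fin(X)$ in $\cpt(\overline{X})$, and the lemma then returns the local non-separation of $\overline{X}\setminus X$.

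It remains to prove the lemma, which I expect to be the main obstacle. For the harder, constructive direction I would start from the density of $\fin(X)$ in $\cpt(\overline{X})$ (immediate from density of $X$ in $\overline{X}$) and upgrade it to homotopy density by building a homotopy $h$ with $h_0=\id$ and $h_t(\cpt(\overline{X})) \subseteq \fin(X)$ for $t>0$. The idea is to approximate a compact set $K$ by a finite subset and then to slide each chosen point lying in $\overline{X}\setminus X$ a short distance into $X$. Here local connectivity of $\overline{X}$ lets one perform the sliding along short paths inside small connected neighborhoods, and the local non-separation hypothesis---that $U \cap X$ is connected for every non-empty connected open $U$---guarantees that these paths can be kept within $X$ and chosen coherently, so that no point becomes trapped on the ``wrong side'' of $\overline{X}\setminus X$ and the assignment varies continuously over the hyperspace; the non-separable cardinality $\kappa$ forces this to be organized through locally finite families rather than a single countable process. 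For the reverse direction I would argue contrapositively: if some connected open $U$ has $U \cap X$ disconnected, choose a path in $U \subseteq \overline{X}$ joining the two pieces through $\overline{X}\setminus X$ and consider the induced path of singletons in $\cpt(\overline{X})$; a homotopy pushing this path into $\fin(X)$ would produce a continuous selection of finite subsets of $X$ crossing between the separated pieces of $U \cap X$, which is impossible. This establishes the equivalence and completes the proof.
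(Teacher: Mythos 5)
Your overall reduction matches the paper's: both arguments first obtain $\cpt(\overline{X})\cong\ell_2(\kappa)$ and $\fin(X)\cong\ell_2^f(\kappa)$ and then reduce everything to the single bridge statement that $\fin(X)$ is homotopy dense in $\cpt(\overline{X})$ iff $\overline{X}\setminus X$ is locally non-separating. One remark on your framing: the detour through absorbing-set theory is both unnecessary and under-justified as you state it. Strong universality is not an intrinsic property of the subspace $\fin(X)$ alone (it depends on the ambient embedding), and you never verify it; but you do not need it, since the characterization of the pair $(\ell_2(\kappa),\ell_2^f(\kappa))$ cited in the paper (West, Koshino) says precisely that once $X\cong\ell_2(\kappa)$, $Y\cong\ell_2^f(\kappa)$ and $Y$ is homotopy dense in $X$, the pair homeomorphism follows. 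Your contrapositive argument for the ``only if'' direction of the bridge (a path of singletons through $\overline{X}\setminus X$ joining two components of $U\cap X$, pushed into $\fin(X)$ with small control, contradicting that $\{K : K\cap V_1\neq\emptyset\}$ is clopen in $\cpt(U\cap X)$) is sound and is essentially the Curtis argument the paper cites.

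The genuine gap is in the ``if'' direction of the bridge, which you yourself identify as the main obstacle but then only gesture at. Your plan---approximate $K\in\cpt(\overline{X})$ by a finite set and slide its points into $X$ along short paths, ``chosen coherently'' so that ``the assignment varies continuously over the hyperspace''---founders exactly on that continuity clause: there is no continuous assignment of finite approximating subsets to compact sets (the number of points needed jumps as $K$ varies, and no selection theorem produces such a map), so a global homotopy $h:\cpt(\overline{X})\times\I\to\cpt(\overline{X})$ with $h_t(\cpt(\overline{X}))\subset\fin(X)$ for $t>0$ cannot be built pointwise in the way you describe; invoking locally finite families does not address this. The paper sidesteps the global construction entirely: since both hyperspaces are ANRs, it applies Sakai's local characterization of homotopy density (extend maps $f:\sph^n\to\mathcal{V}\cap\fin(X)$ to $\tilde{f}:\ball^{n+1}\to\mathcal{U}\cap\fin(X)$ for suitable neighborhoods $\mathcal{V}\subset\mathcal{U}$ of a given $A$, where $\mathcal{V}$ is a basic Vietoris neighborhood built from finitely many connected open sets $U_i$). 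Local non-separation enters only in the case $n=0$, where connectedness of $U_i\cap X$ (plus local path-connectedness of $X$) yields connecting paths in $\fin(X)$; for $n\geq 1$ the extension is obtained formally, via the Curtis--Nhu map $r:\ball^{n+1}\to\fin(\sph^n)$ with $r(y)=\{y\}$ on $\sph^n$ and the union map $\bar{f}(B)=\bigcup_{b\in B}f(b)$, exploiting that $\mathcal{V}$ is closed under finite unions of its members. These two devices---the ANR local-extension criterion and the $r$-map trick---are the actual content of the proof, and your proposal would need to supply them (or an equivalent) to close the gap.
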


\section{Notation and Toru\'nczyk's characterization of Hilbert manifolds}

In this section, we fix some notation and introduce Toru\'nczyk's characterization of Hilbert manifolds that will be used to prove the main theorem.
We denote the closed unit interval $[0,1]$ by $\I$.
Let $X = (X,d)$ be a metric space.
For a point $x \in X$ and subsets $A, B \subset X$, we define the distance $d(x,A)$ between $x$ and $A$ by $d(x,A) = \inf\{d(x,a) \mid a \in A\}$ and the distance $d(A,B)$ between $A$ and $B$ by $d(A,B) = \inf\{d(a,b) \mid a \in A, b \in B\}$.
For $\epsilon > 0$, let $B_d(x,\epsilon) = \{x' \in X \mid d(x,x') < \epsilon\}$ and $N_d(A,\epsilon) = \{x' \in X \mid d(x',A) < \epsilon\}$.
The diameter of $A \subset X$ is denoted by $\diam_d{A}$.
The topology of $\cpt(X)$ is induced by \textit{the Hausdorff metric} $d_H$ defined as follows:
 $$d_H(A,B) = \inf\{r > 0 \mid A \subset N_d(B,r), B \subset N_d(A,r)\}.$$

It is said that a space $X$ has \textit{the countable locally finite approximation property} if for each open cover $\mathcal{U}$ of $X$, there exists a sequence $\{f_n : X \to X\}_{n < \omega}$ of maps such that every $f_n$ is $\mathcal{U}$-close to the identity map on $X$ and the family $\{f_n(X)\}_{n < \omega}$ of the images is locally finite in $X$.
Recall that for maps $f : X \to Y$ and $g : X \to Y$, and for an open cover $\mathcal{U}$ of $Y$, $f$ is \textit{$\mathcal{U}$-close} to $g$ if for each point $x \in X$, there exists a member $U \in \mathcal{U}$ such that $f(x)$ and $g(x)$ are contained in $U$.
For $n < \omega$, a space $X$ has \textit{the $\kappa$-discrete $n$-cells property} provided that the following condition holds:
\begin{itemize}
 \item For every open cover $\mathcal{U}$ of $X$ and every map $f : \bigoplus_{\gamma < \kappa} A_\gamma \to X$,
  where each $A_\gamma = \I^n$,
  there is a map $g : \bigoplus_{\gamma < \kappa} A_\gamma \to X$ such that $g$ is $\mathcal{U}$-close to $f$ and $\{g(A_\gamma)\}_{\gamma < \kappa}$ is discrete in $X$.
\end{itemize}
H.~Toru\'nczyk \cite{Tor5,Tor6} gave the following celebrated characterization to an $\ell_2(\kappa)$-manifold (cf.~\cite[Theorem~3.1]{BZ}):

\begin{thm}\label{DCP-char.}
A connected space $X$ is an $\ell_2(\kappa)$-manifold if and only if the following conditions are satisfied:
\begin{enumerate}
 \item $X$ is a topologically complete ANR of density $\kappa$;
 \item $X$ has the countable locally finite approximation property;
 \item $X$ has the $\kappa$-discrete $n$-cells property for every $n < \omega$.
\end{enumerate}
\end{thm}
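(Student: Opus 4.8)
This is H.~Toru\'nczyk's celebrated characterization of $\ell_2(\kappa)$-manifolds, so my plan is to quote it and refer the reader to \cite{Tor5,Tor6} (and to \cite[Theorem~3.1]{BZ} for the exact formulation) rather than to re-derive it: it is a deep theorem whose proof requires the full machinery of infinite-dimensional topology, and since it is the very \emph{tool} by which the Main Theorem will be proved, reproving it here would be circular and out of scope. It is still worth recording the architecture of the argument, because it explains why conditions (1)--(3) are the correct hypotheses.

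For the necessity direction I would first check that $\ell_2(\kappa)$ itself satisfies (1)--(3): it is a completely metrizable absolute retract of density $\kappa$, which gives (1); exploiting the linear structure to translate images off to infinity yields locally finite families, giving (2); and the infinite dimensionality together with the size $\kappa$ of an orthonormal basis allows one to spread $\kappa$ many cells into a discrete family, giving (3). Each of these properties is then inherited by any space locally modeled on $\ell_2(\kappa)$ --- the ANR, completeness and density conditions are local in nature, and the two approximation properties are manifold invariants --- so the necessity follows.

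The sufficiency direction is where essentially all the difficulty lies, and it is the step I expect to be the real obstacle. The plan would be to construct a homeomorphism of $X$ onto an open subset of $\ell_2(\kappa)$ as a limit of successively sharper approximations. The engine is an approximation theorem supplied by conditions (2) and (3): maps of the models $\I^n$, and of their $\kappa$-fold topological sums $\bigoplus_{\gamma<\kappa} A_\gamma$, into $X$ can be approximated by closed embeddings whose images are $Z$-sets placed discretely, which in turn permits $Z$-set unknotting and absorption. Iterating these approximations while controlling the errors through Bing's shrinking criterion (equivalently an inductive convergence criterion) yields an honest homeomorphism in the limit, and a Bessaga--Pe\l czy\'nski-type classification of infinite-dimensional Fr\'echet spaces identifies the target as $\ell_2(\kappa)$. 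The non-separable setting is exactly what forces \emph{discreteness} rather than mere local finiteness in condition (3), together with density-$\kappa$ bookkeeping throughout; managing this uniformity over $\kappa$-indexed families is the technical heart of Toru\'nczyk's proof.
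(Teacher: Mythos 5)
Your approach matches the paper exactly: Theorem~\ref{DCP-char.} is stated there without proof, cited to Toru\'nczyk \cite{Tor5,Tor6} (cf.~\cite[Theorem~3.1]{BZ}), precisely as you propose. Your supplementary sketch of the necessity and sufficiency directions is a reasonable gloss but plays no role in the paper, which simply quotes the result as the tool for proving the Main Theorem.
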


In the above, replacing ``ANR'' with ``AR'', we can obtain a characterization of a Hilbert space $\ell_2(\kappa)$.

\section{Basic topological properties of $\cpt(X)$}

In this section, some basic properties of $\cpt(X)$ are collected.
On the density of $\cpt(X)$, the following holds \cite[Corollary~4.2]{Yag}:

\begin{prop}\label{dens.}
The hyperspace $\cpt(X)$ has the same density as a space $X$.
\end{prop}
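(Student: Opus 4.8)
The plan is to prove the two inequalities $\dens(X) \le \dens(\cpt(X))$ and $\dens(\cpt(X)) \le \dens(X)$ separately, fixing throughout an admissible metric $d$ on $X$ so that the Hausdorff metric $d_H$ induces the Vietoris topology on $\cpt(X)$.

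First I would establish the lower bound. Consider the singleton map $s : X \to \cpt(X)$ given by $s(x) = \{x\}$. Since $d_H(\{x\},\{y\}) = d(x,y)$, this is an isometric embedding, so the subspace $\mathcal{S} = s(X)$ is isometric, hence homeomorphic, to $X$. Because density is a topological invariant and weight is hereditary for subspaces (and weight coincides with density for metrizable spaces), any subspace has density at most that of the ambient space; therefore $\dens(X) = \dens(\mathcal{S}) \le \dens(\cpt(X))$.

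For the upper bound, let $D \subset X$ be a dense subset with $|D| = \dens(X) =: \kappa$, which I may take to be infinite (the finite case being trivial under the usual convention on density). I claim the family $\fin(D)$ of non-empty finite subsets of $D$ is dense in $\cpt(X)$; since $|\fin(D)| = \kappa$, this yields $\dens(\cpt(X)) \le \kappa$. To verify the claim, fix $K \in \cpt(X)$ and $\epsilon > 0$. By compactness of $K$ choose a finite $\epsilon/2$-net $\{x_1,\dots,x_m\} \subset K$, and by density of $D$ pick $d_i \in D$ with $d(x_i,d_i) < \epsilon/2$. Setting $F = \{d_1,\dots,d_m\} \in \fin(D)$, a short triangle-inequality estimate gives $K \subset N_d(F,\epsilon)$ and $F \subset N_d(K,\epsilon)$, so that $d_H(K,F) \le \epsilon$; applying this to $\epsilon/2$ shows $\fin(D)$ meets every $\epsilon$-neighborhood.

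Combining the two inequalities gives $\dens(\cpt(X)) = \dens(X)$. The argument is essentially routine, so I do not anticipate a genuine obstacle; the only place requiring slight care is the upper bound, where compactness of $K$ is exactly what permits replacing $K$ by a finite subset, and where one must verify both inclusions defining $d_H$ rather than only $K \subset N_d(F,\epsilon)$. One should also keep in mind the convention on density for finite $X$ in order to state the equality cleanly.
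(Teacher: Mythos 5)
Your proof is correct. The paper gives no proof of this proposition itself but quotes it from \cite[Corollary~4.2]{Yag}, and your argument --- the isometric singleton embedding $x \mapsto \{x\}$ for the lower bound (legitimately using that density equals weight for metrizable spaces, so density is hereditary here), together with the density of $\fin(D)$ in $\cpt(X)$ for a dense $D \subset X$ via finite $\epsilon/2$-nets for the upper bound --- is exactly the standard proof of that cited result.
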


\begin{prop}\label{nbd.dens.}
Let $X$ be a space and $x \in X$.
If any neighborhood of $\{x\}$ in $\cpt(X)$ is of density $\kappa$,
 then every neighborhood of $x$ in $X$ is of density $\kappa$.
\end{prop}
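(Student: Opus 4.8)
The plan is to reduce everything to Proposition~\ref{dens.} by observing that the ``large'' open neighborhoods of $\{x\}$ in $\cpt(X)$ are, up to homeomorphism, just the hyperspaces of the open neighborhoods of $x$ in $X$. For an open set $V \subseteq X$ write $V^+ = \{K \in \cpt(X) \mid K \subseteq V\}$. This is a subbasic open set in the Vietoris topology, and $\{x\} \in V^+$ exactly when $x \in V$; hence whenever $V$ is an open neighborhood of $x$, the set $V^+$ is an open neighborhood of $\{x\}$ in $\cpt(X)$, to which the hypothesis applies.

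First I would record the standard identification that, for $V$ open in $X$, the subspace $V^+$ of $\cpt(X)$ coincides with $\cpt(V)$ equipped with its own Vietoris topology. As sets they agree because compactness is absolute: a compact subset of $X$ contained in $V$ is the same object as a compact subset of the subspace $V$. For the topologies one uses that, $V$ being open, the open subsets of $V$ are precisely the open subsets of $X$ contained in $V$; then a direct check on basic Vietoris neighborhoods shows $\langle W_1,\dots,W_m\rangle \cap V^+ = \langle W_1\cap V,\dots,W_m\cap V\rangle$, the latter computed in $\cpt(V)$, and conversely every basic neighborhood of $\cpt(V)$ arises this way. Thus $V^+$ is homeomorphic to $\cpt(V)$.

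With this in hand the density computation is immediate. By hypothesis every neighborhood of $\{x\}$ in $\cpt(X)$ has density $\kappa$; applying this to $V^+$ and combining the homeomorphism $V^+ \approx \cpt(V)$ with Proposition~\ref{dens.} (taking the space $V$ in place of $X$) gives $\dens V = \dens \cpt(V) = \dens V^+ = \kappa$ for \emph{every} open neighborhood $V$ of $x$. In particular, taking $V = X$, which is an open neighborhood of $x$ with $V^+ = \cpt(X)$, yields $\dens X = \kappa$.

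Finally I would pass from open neighborhoods to arbitrary ones by metrizability. Let $N$ be any neighborhood of $x$ and choose an open $V$ with $x \in V \subseteq N \subseteq X$. Since all the spaces involved are metrizable, density equals weight and is monotone under passage to subspaces, so $\dens V \le \dens N \le \dens X$; as both ends equal $\kappa$ by the previous step, $\dens N = \kappa$. The only genuinely non-formal ingredient is the identification $V^+ \approx \cpt(V)$ for open $V$, and even that is routine; everything afterward is bookkeeping with the weight-equals-density monotonicity valid in metrizable spaces.
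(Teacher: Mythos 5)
Your proposal is correct and takes essentially the same route as the paper: both reduce the statement to Proposition~\ref{dens.} by regarding the hyperspace of a neighborhood of $x$ as a neighborhood of $\{x\}$ in $\cpt(X)$. The paper argues contrapositively with $\cpt(U)$ for an arbitrary neighborhood $U$ (leaving the identification and the monotonicity of density in metrizable spaces implicit), while you argue directly via the homeomorphism $V^+ \approx \cpt(V)$ for open $V$ and then pass to arbitrary neighborhoods by monotonicity --- a difference of presentation only.
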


\begin{proof}
Since $\cpt(X)$ is of density $\kappa$,
 $X$ is also of density $\kappa$ by Proposition~\ref{dens.}.
Suppose that the point $x \in X$ has a neighborhood $U$ of density $< \kappa$.
Observe that $\cpt(U) \subset \cpt(X)$ is a neighborhood of $\{x\}$.
Due to Proposition~\ref{dens.}, the density of $\cpt(U)$ is less than $\kappa$,
 which is a contradiction.
Hence all neighborhoods of $x$ are of density $\kappa$.
\end{proof}

We can easily observe the following (cf.~\cite[Theorem~5.12.5.~(2)]{Sa6}):

\begin{prop}\label{compl.}
For every space $X$, $X$ is topologically complete if and only if $\cpt(X)$ is topologically complete.
\end{prop}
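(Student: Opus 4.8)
The plan is to prove each implication by working with the Hausdorff metric $d_H$, which by the discussion in Section~2 induces the Vietoris topology on $\cpt(X)$; thus it suffices to produce, in each direction, a complete admissible metric on the relevant space.

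For the forward implication, suppose $X$ is topologically complete and fix an admissible complete metric $d$ on $X$. I claim that $(\cpt(X),d_H)$ is then complete, which finishes this direction since $d_H$ is admissible for the Vietoris topology. Given a $d_H$-Cauchy sequence $\{A_n\}_{n<\omega}$ in $\cpt(X)$, I would pass to a subsequence with $d_H(A_n,A_{n+1}) < 2^{-n}$, take the candidate limit $A = \bigcap_{n<\omega}\cl\bigl(\bigcup_{m\geq n}A_m\bigr)$, and verify that $A$ is non-empty and compact and that $A_n \to A$ with respect to $d_H$. Non-emptiness and convergence follow by building, for a point of some $A_n$, a sequence with terms in the successive $A_m$ that is $d$-Cauchy by the $2^{-m}$ estimate, and appealing to completeness of $d$ to find its limit in $A$; compactness of $A$ follows from its closedness in the complete space $X$ together with total boundedness, which is inherited from the uniform $d_H$-approximation of $A$ by the compact sets $A_n$.

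For the converse, suppose $\cpt(X)$ is topologically complete, fix any admissible metric $d$ on $X$, and consider the singleton map $e : X \to \cpt(X)$ given by $e(x)=\{x\}$. Since $d_H(\{x\},\{y\}) = d(x,y)$, the map $e$ is an isometric embedding, hence a homeomorphism onto its image $e(X) = \{\{x\} \mid x \in X\}$. Moreover $e(X)$ is closed in $\cpt(X)$, since it coincides with the zero set $\{A \in \cpt(X) \mid \diam_d A = 0\}$ of the continuous diameter function. As a closed subspace of a topologically complete space is topologically complete, $X \cong e(X)$ is topologically complete.

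The main obstacle is the forward direction, specifically confirming that the candidate limit set $A$ is compact and is genuinely the $d_H$-limit of the Cauchy sequence; the converse is essentially immediate once one observes that the set of singletons is the closed zero set of the diameter.
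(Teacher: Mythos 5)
Your proposal is correct, and it is essentially the standard argument behind the paper's treatment of this proposition: the paper offers no proof at all, simply citing \cite[Theorem~5.12.5.~(2)]{Sa6}, and that reference establishes exactly what you prove --- completeness of $(\cpt(X),d_H)$ for a complete admissible metric $d$ via the candidate limit $A=\bigcap_{n<\omega}\cl\bigl(\bigcup_{m\geq n}A_m\bigr)$, and the converse via the closed isometric copy of $X$ as the singletons. Both directions of your sketch are sound, including the two points you flag: total boundedness of $A$ is inherited from the compact $A_n$ through the $d_H$-approximation, and $\{\{x\}\mid x\in X\}$ is closed as the zero set of the $d_H$-continuous diameter function (indeed $|\diam_d A-\diam_d B|\leq 2d_H(A,B)$).
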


Concerning the ANR-property of $\cpt(X)$, the following holds \cite{Wo,Ta} (cf.~\cite[Theorem~1.6]{Cu2}):

\begin{prop}\label{ar}
Let $X$ be topologically complete.
Then $X$ is connected and locally connected if and only if $\cpt(X)$ is an AR.
\end{prop}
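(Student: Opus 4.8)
The plan is to prove the two implications separately, using throughout that $\cpt(X)$ is completely metrizable whenever $X$ is (Proposition~\ref{compl.}) together with the classical fact that a metrizable space is an AR precisely when it is a contractible ANR.

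For the sufficiency direction, suppose $X$ is connected, locally connected and topologically complete. Connectivity of $\cpt(X)$ is cheap: the finite sets are dense in $\cpt(X)$, and $\fin(X)=\bigcup_{n}\{\,\{x_1,\dots,x_n\}:x_i\in X\}$ is connected as an increasing union of continuous images of the connected spaces $X^n$ sharing the singletons, so $\cpt(X)=\cl(\fin(X))$ is connected. The substantial point is that $\cpt(X)$ is an ANR. Here I would exploit local connectivity of $X$ to manufacture a \emph{local equiconnection} on $\cpt(X)$, that is, a continuous partial map $\lambda$ assigning to Hausdorff-close $A,B\in\cpt(X)$ and $t\in\I$ a compactum $\lambda(A,B,t)$ with $\lambda(A,B,0)=A$, $\lambda(A,B,1)=B$ and $\lambda(A,A,t)=A$; since a locally equiconnected metrizable space is an ANR, this suffices. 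To build $\lambda$, I would cover $A\cup B$ by small sets that local connectedness allows to be taken connected, and use the resulting short connected ``bridges'' to grow $A$ continuously onto $A\cup B$ and then onto $B$, all within a prescribed small neighborhood. Contractibility of $\cpt(X)$ is then the standard ``grow to a basepoint and collapse'' homotopy: fixing $x_0\in X$, one enlarges each $K$ across a continuum joining it to $x_0$ (available because connectivity and local connectivity make $X$ arcwise connected) and then shrinks to $\{x_0\}$, the continuity of the enlargement again resting on the bridge construction. Being a contractible ANR, $\cpt(X)$ is an AR.

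For the necessity direction, assume $\cpt(X)$ is an AR; then it is contractible and locally contractible, in particular connected and locally connected, and I must push these two properties down to $X$ along the closed embedding $x\mapsto\{x\}$. Connectivity transfers by a clopen argument: if $X=U\sqcup V$ with $U,V$ nonempty open, then $\langle U\rangle=\cpt(U)$ is clopen in $\cpt(X)$, its complement being the open set $\{K:K\cap V\neq\emptyset\}$, contradicting connectedness. For local connectivity I would use the following lemma, the heart of this direction: if $\mathcal W$ is a connected open neighborhood of a singleton $\{x\}$ in $\cpt(X)$, then $Y=\bigcup_{K\in\mathcal W}K$ is a connected open neighborhood of $x$ in $X$. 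Openness of $Y$ holds because, given $y\in K\in\mathcal W$, a basic neighborhood $\langle U_1,\dots,U_m\rangle\subset\mathcal W$ of $K$ lets one replace $K$ by $K\cup\{y'\}$ for $y'$ ranging over the $U_j$ containing $y$, keeping membership in $\mathcal W$. Connectedness of $Y$ follows by assuming a separation $Y=P\sqcup Q$ into nonempty open sets and partitioning $\mathcal W$ into the three open pieces $\mathcal W\cap\langle P\rangle$, $\mathcal W\cap\langle Q\rangle$ and $\{K\in\mathcal W:K\cap P\neq\emptyset\neq K\cap Q\}$, which are pairwise disjoint and cover $\mathcal W$, hence clopen; connectedness forces one of them to be all of $\mathcal W$, and each case is impossible (the first forces $Q=\emptyset$, the second $P=\emptyset$, and the third is excluded because the singleton $\{x\}\in\mathcal W$ cannot meet both $P$ and $Q$). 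Granting the lemma, local connectedness of $\cpt(X)$ at $\{x\}$ yields, inside any $\langle U\rangle$, a connected open neighborhood $\mathcal W$, whence $\bigcup\mathcal W$ is a connected neighborhood of $x$ contained in $U$, so $X$ is locally connected.

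The main obstacle I expect lies entirely on the sufficiency side, namely making the enlargement maps continuous where pieces are created or absorbed: naively forming weighted unions of compacta is discontinuous precisely when a weight reaches $0$, since a set then appears or vanishes as a Hausdorff jump. The role of local connectivity is exactly to repair this, by routing every such change through small connected segments so that vanishing pieces shrink into their neighbors rather than disappearing abruptly; verifying the Hausdorff-continuity and small-diameter control of this bridge construction is the genuinely technical step, and it is what underwrites both the equiconnection and the contraction. Topological completeness enters only through Proposition~\ref{compl.}, guaranteeing that $\cpt(X)$ is completely metrizable so that the local ANR structure globalizes.
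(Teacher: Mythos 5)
First, note that the paper does not actually prove Proposition~\ref{ar}: it is quoted from the literature (\cite{Wo,Ta}, cf.\ \cite[Theorem~1.6]{Cu2}), so your attempt must be measured against those proofs. Your necessity half is correct and essentially complete: the clopen decomposition of $\cpt(X)$ into $\langle U\rangle$ and $\{K : K\cap V\neq\emptyset\}$ transfers connectedness down, and your key lemma --- that for a connected open $\mathcal{W}\ni\{x\}$ the union $Y=\bigcup_{K\in\mathcal{W}}K$ is a connected open neighborhood of $x$ --- is airtight: all three pieces $\mathcal{W}\cap\langle P\rangle$, $\mathcal{W}\cap\langle Q\rangle$ and $\{K\in\mathcal{W} : K\cap P\neq\emptyset\neq K\cap Q\}$ are indeed Vietoris-open, and the singleton $\{x\}$ excludes the third. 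This is a clean argument and matches the standard one.

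The sufficiency half, however, has a genuine gap at its foundation: the principle you invoke, that a locally equiconnected metrizable space is an ANR, is not a classical fact but a famous problem, and it is in fact false. By R.~Cauty's example (\emph{Un espace m\'etrique lin\'eaire qui n'est pas un r\'etracte absolu}, Fund.\ Math.\ \textbf{146} (1994), 85--99) there is a metric linear space that is not an AR; every metric linear space is equiconnected via $(x,y,t)\mapsto(1-t)x+ty$ and contractible, so if LEC implied ANR this space would be a contractible ANR and hence an AR --- a contradiction. Cauty's space is $\sigma$-compact, hence not completely metrizable, but ``completely metrizable $+$ LEC $\Rightarrow$ ANR'' is not a known theorem either, so topological completeness cannot rescue the reduction; your remark that completeness merely ``globalizes the local ANR structure'' has no theorem behind it. Beyond this, the step you yourself flag as ``the genuinely technical step'' --- Hausdorff-continuity of the bridge/growth construction, simultaneously in $(A,B,t)$ --- is exactly where the whole content of the theorem lives, and it is left unexecuted. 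The literature does not go through equiconnection at all: Curtis establishes the ANR property via a Lefschetz-type partial-realization criterion, extending maps of polyhedra into $\cpt(X)$ skeleton by skeleton, taking unions of compacta joined along small arcs with the retraction-type maps $r:\sigma\to\fin(\partial\sigma)$ of Curtis--Nhu \cite{CN} --- precisely the technique this paper recycles in the proof of Proposition~\ref{lfap}. So your proposal is correct in one direction, but in the other it rests on a false general principle and a sketched construction, and would need to be rebuilt along the partial-realization lines to constitute a proof.
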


\section{The countable locally finite approximation property of $\cpt(X)$}

In this section, using the similar strategy in the proof of Theorem~E of \cite{Cu1}, we shall verify the countable locally finite approximation property of the hyperspace $\cpt(X)$.
Let $K$ be a simplicial complex.
We denote the polyhedron\footnote{In this paper, we do not need polyhedra to be metrizable.} of $K$ by $|K|$ and the $n$-skeleton of $K$ by $K^{(n)}$ for each $n < \omega$.
We often regard $\sigma \in K$ as a simplicial complex consisting of its faces.
The next two lemmas concerning nice subdivisions of simplicial complexes are used in the proof of Theorem~E of \cite{Cu1} and the details of the proofs is given in \cite{Kos8}.

\begin{lem}\label{subd.1}
Let $X = (X,d)$ be a metric space, $K$ be a simplicial complex and $f : |K| \to X$ be a map.
For every map $\alpha : X \to (0,\infty)$, there is a subdivision $K'$ of $K$ such that $\diam_d{f(\sigma)} < \inf_{x \in \sigma} \alpha f(x)$ for any $\sigma \in K'$.
\end{lem}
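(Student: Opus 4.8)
The plan is to reduce the inequality to a subordination condition for an open cover and then to realize that cover by subdividing one skeleton at a time. Put $\beta = \alpha \circ f \colon |K| \to (0,\infty)$; this is continuous and positive. For each $p \in |K|$, continuity of $\beta$ gives an open neighborhood $W_p \ni p$ with $\beta > \tfrac12\beta(p)$ on $W_p$, and continuity of $f$ gives an open neighborhood $V_p \subset W_p$ of $p$ with $d(f(x),f(p)) < \tfrac18\beta(p)$ for all $x \in V_p$. If a closed simplex $\sigma$ of a subdivision satisfies $\sigma \subset V_p$, then $\diam_d f(\sigma) \le \tfrac14\beta(p) < \tfrac12\beta(p) \le \inf_{x\in\sigma}\beta(x)$, which is exactly the desired inequality $\diam_d f(\sigma) < \inf_{x\in\sigma}\alpha f(x)$. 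Hence it suffices to produce a subdivision $K'$ of $K$ each of whose closed simplices lies in some member of the open cover $\mathcal V = \{V_p \mid p \in |K|\}$ of $|K|$; this reduced statement is the classical property that an open cover of a polyhedron admits a subordinate subdivision.

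To establish it constructively I would induct on the skeleta $K^{(n)}$, building compatible subdivisions $K_n$ of $K^{(n)}$ with $K_n|_{K^{(n-1)}} = K_{n-1}$ and every simplex of $K_n$ contained in a member of $\mathcal V$; then $K' = \bigcup_{n<\omega} K_n$ is the required subdivision, since every simplex of $K$ is finite-dimensional, hence reached at a finite stage, and the weak topology glues the pieces. The case $n=0$ is trivial as each vertex maps to a point. For the inductive step one fixes an $n$-simplex $\sigma$, whose boundary $\partial\sigma$ already carries the \emph{finite} subdivision $T = K_{n-1}|_{\partial\sigma}$ with all simplices in members of $\mathcal V$, and one must extend $T$ across $\sigma$ so that every new simplex again lies in a member of $\mathcal V$.

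The main obstacle is precisely this extension step. The difficulty is that membership in a member of $\mathcal V$ bounds the $d$-diameter of the $f$-image of a simplex but not its intrinsic diameter in $\sigma$, so $T$ may contain simplices that are geometrically large while still lying in members, and such large simplices cannot simply be coned or prism-triangulated into small pieces. I would resolve it in two stages. First, since each of the finitely many $s \in T$ is compact and lies in an open $V_{p(s)}$, choose a thin collar of $\partial\sigma$ in $\sigma$ whose thickness is below the intrinsic distance in $\sigma$ from each $s$ to $\sigma \setminus V_{p(s)}$; triangulating this collar with outer boundary $T$ and an arbitrarily fine inner boundary keeps every collar simplex inside the corresponding $V_{p(s)}$, while its inner boundary becomes a subdivision of mesh below the Lebesgue number $\lambda_\sigma$ of the cover $\{V_p \cap \sigma\}$ of the compact simplex $\sigma$. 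Second, the remaining core is a simplex whose boundary is now of mesh $< \lambda_\sigma$, so it can be filled by further thin collars together with a barycentric subdivision of a small central core in such a way that every simplex produced has intrinsic diameter $< \lambda_\sigma$ and therefore lies in a member of $\mathcal V$. Because the whole construction is carried out on each $\sigma$ separately and alters only the interior of $\sigma$, it is compatible across shared faces even when $K$ is neither locally finite nor finite-dimensional. This is a standard but technical manipulation of simplicial complexes, and I would carry out the details exactly as in \cite{Kos8}, to which the statement defers.
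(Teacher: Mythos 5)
Your proposal is correct and follows essentially the route the paper itself relies on: the paper gives no in-text proof of Lemma~\ref{subd.1}, deferring the details to \cite{Kos8} (following the proof of Theorem~E in \cite{Cu1}), and your argument --- reducing the inequality to finding a subdivision subordinate to the open cover $\{V_p\}$ via the $\tfrac18/\tfrac12$ Lebesgue-type estimates, then inducting over skeleta with prism-triangulated collars on each compact simplex and a Lebesgue number to fill the core --- is precisely that standard construction, and your estimates do yield the required strict inequality $\diam_d f(\sigma) < \inf_{x\in\sigma}\alpha f(x)$. The one place to be slightly more careful in a write-up is the collar step: a collar simplex over $s \in T$ lies within (collar thickness)$\,\times\,\diam\sigma$ of $s$ under the radial parametrization, so the thickness must be taken small relative to $\min_s \dist(s,\sigma\setminus V_{p(s)})/\diam\sigma$ rather than the distance itself, but this is a harmless constant and does not affect the argument.
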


\begin{lem}\label{subd.2}
For each map $\alpha : |K| \to (0,\infty)$ of the polyhedron of a simplicial complex $K$ and $\beta > 1$, $K$ has a subdivision $K'$ such that $\sup_{x \in \sigma} \alpha(x) < \beta\inf_{x \in \sigma} \alpha(x)$ for all $\sigma \in K'$.
\end{lem}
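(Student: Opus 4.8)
The plan is to reduce Lemma~\ref{subd.2} to Lemma~\ref{subd.1} by passing to logarithms, which turns the multiplicative control $\sup_{x\in\sigma}\alpha(x) < \beta\inf_{x\in\sigma}\alpha(x)$ into additive control on the diameter of $\log\alpha$. First I would put $g = \log\circ\,\alpha : |K| \to \R$; this is continuous because $\alpha$ is continuous with values in $(0,\infty)$, and I regard $\R$ as a metric space under $d(s,t) = |s-t|$. Since $\beta > 1$, we have $\log\beta > 0$, so the constant map $c : \R \to (0,\infty)$ with $c \equiv \log\beta$ is an admissible choice for the positive control function in Lemma~\ref{subd.1}.

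Next I would apply Lemma~\ref{subd.1} to the map $f = g : |K| \to \R$ and to the function $c$, obtaining a subdivision $K'$ of $K$ with $\diam_d g(\sigma) < \inf_{x\in\sigma} c(g(x)) = \log\beta$ for every $\sigma \in K'$. To convert this back, I would note that each closed simplex $\sigma \in K'$ is compact, so $\alpha|_\sigma$ attains a positive minimum and a maximum, and that for a bounded subset of $\R$ the diameter equals $\sup - \inf$. Hence $\diam_d g(\sigma) = \log\big(\sup_{x\in\sigma}\alpha(x)\big) - \log\big(\inf_{x\in\sigma}\alpha(x)\big)$, and the inequality above rearranges at once to $\sup_{x\in\sigma}\alpha(x) < \beta\inf_{x\in\sigma}\alpha(x)$, which is exactly the conclusion.

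I do not anticipate a genuine obstacle here; the whole content is the observation that $\log$ sends the ratio threshold $\beta$ to the additive threshold $\log\beta$, positive precisely because $\beta > 1$, matching the positivity required of the control function in Lemma~\ref{subd.1}. The only points needing a line of care are the continuity of $g$ and the identity $\diam_d g(\sigma) = \sup_{x\in\sigma} g(x) - \inf_{x\in\sigma} g(x)$ in $\R$. Were one to avoid invoking Lemma~\ref{subd.1}, the fallback would be a direct argument: iterate barycentric subdivision and bound the oscillation of $g$ on each simplex using continuity together with the local finiteness of $K$; but this is strictly longer, and the logarithmic reduction is the natural route.
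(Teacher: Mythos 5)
Your proof is correct. There is, in fact, no in-paper argument to compare it against: the paper states both subdivision lemmas with the remark that the detailed proofs appear in \cite{Kos8}, so Lemma~\ref{subd.2} is used here as a black box. Your logarithmic reduction to Lemma~\ref{subd.1} is sound in every detail: $g = \log\alpha$ is continuous, the constant function $\log\beta > 0$ is an admissible control map on $\R$, Lemma~\ref{subd.1} yields a subdivision $K'$ with $\diam g(\sigma) < \log\beta$ for all $\sigma \in K'$, and since each closed simplex is compact and connected, $g(\sigma)$ is a compact interval, so $\diam g(\sigma) = \log\bigl(\sup_{x \in \sigma}\alpha(x)\bigr) - \log\bigl(\inf_{x \in \sigma}\alpha(x)\bigr)$ with $\inf_{x \in \sigma}\alpha(x) > 0$; exponentiating gives the claim. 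One remark: the logarithm is not actually needed. Applying Lemma~\ref{subd.1} directly to $f = \alpha : |K| \to (0,\infty) \subset \R$ with the control map $\gamma(t) = (\beta - 1)t$ gives $\sup_{x \in \sigma}\alpha(x) - \inf_{x \in \sigma}\alpha(x) = \diam_d\alpha(\sigma) < (\beta - 1)\inf_{x \in \sigma}\alpha(x)$, which is exactly the conclusion, with no change of scale. Either way the payoff is the same: Lemma~\ref{subd.2} becomes a short corollary of Lemma~\ref{subd.1} rather than a parallel statement requiring its own subdivision argument as in the cited source. A final point in your route's favor: it uses only that the \emph{target} $\R$ is a metric space, so it is untouched by the paper's footnote that the polyhedron $|K|$ itself need not be metrizable.
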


The following two lemmas are also used in the proof of Theorem~E of \cite{Cu1}.
The analogues of these lemmas for hyperspaces of finite subsets are proved in \cite{Kos8}.

\begin{lem}\label{subseq.}
Let $X = (X,d)$ be a metric space.
Suppose that $\{A_n\}_{n < \omega}$ is a sequence in $\cpt(X) = (\cpt(X),d_H)$ converging to $A \in \cpt(X)$.
Then for each closed set $B_n \subset A_n$, $\{B_n\}_{n < \omega}$ has a subsequence that converges to some compact subset $B \subset A$.
\end{lem}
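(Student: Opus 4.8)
The plan is to confine all the sets $A_n$ to a single compact subset of $X$ and then exploit the compactness of the hyperspace of a compact metric space. Set $C = \cl\!\big(A \cup \bigcup_{n < \omega} A_n\big)$. The heart of the argument is to show that $C$ is compact, and here the convergence $A_n \to A$ in $d_H$ is indispensable. Since $A_n \to A$, for every $\epsilon > 0$ there is $N$ with $A_n \subset N_d(A,\epsilon)$ whenever $n \geq N$, so the ``tail'' $\bigcup_{n \geq N} A_n$ lies in a small neighborhood of the compact set $A$, while the finitely many remaining sets $A, A_0, \dots, A_{N-1}$ are compact. I would make this precise by a sequential argument: given any sequence $\{x_k\}$ in $A \cup \bigcup_{n} A_n$, either infinitely many of the $x_k$ lie in one fixed set among $A, A_0, A_1, \dots$ (yielding a convergent subsequence by compactness of that set), or one may pass to a subsequence with $x_k \in A_{m_k}$ and $m_k \to \infty$; in the latter case $d(x_k, A) \leq d_H(A_{m_k}, A) \to 0$, so choosing $a_k \in A$ with $d(x_k, a_k)$ small and using compactness of $A$ produces a subsequence converging to a point of $A$. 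Thus every sequence in $A \cup \bigcup_n A_n$ has a subsequence converging in $X$, whence $C$ is compact.

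Once $C$ is known to be compact, the selection step is standard: it is classical that the hyperspace $\cpt(C)$ of a compact metric space $C$ is itself compact in the Hausdorff metric. Each $B_n$ is closed in the compact set $A_n$, hence a compact subset of $C$ with $B_n \subset A_n \subset C$, and (regarding the $B_n$ as elements of $\cpt(C)$, so non-empty) we have $B_n \in \cpt(C)$. By compactness of $\cpt(C)$, the sequence $\{B_n\}_{n<\omega}$ admits a subsequence $\{B_{n_k}\}_{k<\omega}$ converging in $d_H$ to some $B \in \cpt(C)$.

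It remains to check that $B \subset A$. Fix $b \in B$. Since $B_{n_k} \to B$, we have $d(b, B_{n_k}) \leq d_H(B, B_{n_k}) \to 0$, so we may pick $b_k \in B_{n_k}$ with $d(b, b_k) \to 0$. As $b_k \in B_{n_k} \subset A_{n_k}$, it follows that $d(b_k, A) \leq d_H(A_{n_k}, A) \to 0$, and therefore $d(b, A) \leq d(b, b_k) + d(b_k, A) \to 0$. Because $A$ is closed, $b \in A$, giving $B \subset A$ as required. I expect the main obstacle to be the compactness of $C$, that is, the relative compactness of $\bigcup_{n} A_n$; this is the only place where the hypothesis $A_n \to A$ does genuine work, the remaining steps being an appeal to the known compactness theorem for hyperspaces of compacta together with a routine point-chasing argument.
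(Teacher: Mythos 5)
Your proof is correct and is essentially the standard argument for this lemma; the paper itself gives no proof, deferring to \cite{Cu1} and \cite{Kos8}, where the same route is taken --- show that $A \cup \bigcup_{n<\omega} A_n$ has compact closure (this is exactly where $A_n \to A$ in $d_H$ is used), invoke the classical compactness of the hyperspace of a compactum to extract a convergent subsequence of $\{B_n\}_{n<\omega}$, and verify $B \subset A$ by point-chasing. The only remarks worth making are cosmetic: the $B_n$ must be non-empty to lie in $\cpt(C)$ (which, as you note, is the hyperspace convention in force here), and your passage from ``every sequence in $A \cup \bigcup_{n<\omega} A_n$ has a convergent subsequence'' to compactness of the closure $C$ is the routine $1/k$-approximation step, correctly treated as such.
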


\begin{lem}\label{arc}
Let $X = (X,d)$ be a locally path-connected metric space and $\alpha : \cpt(X) \to (0,\infty)$ be a map.
Then there is a map $\beta : \cpt(X) \to (0,\infty)$ such that for any $A \in \cpt(X)$, each point $x \in N_d(A,\beta(A))$ has an arc connecting to some point of $A$ of diameter $< \alpha(A)$.
\end{lem}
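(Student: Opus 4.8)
The plan is to reduce the lemma to a uniform, perturbation-stable version of local arc-connectivity near each compact set, and then to assemble the global map $\beta$ from it by a partition of unity. The decisive observation is that the required property is monotone in the radius: if $d(x,A)<\beta(A)$ forces a short arc from $x$ to $A$, then the same implication holds after replacing $\beta(A)$ by any smaller value. Hence it suffices to cover $\cpt(X)$ by neighbourhoods on each of which a single \emph{constant} radius works, and then to take a continuous convex combination of those constants, which automatically lies below the working radius at every point.

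The core is the following claim: for each $A\in\cpt(X)$ and each $\epsilon>0$ there are $\eta>0$ and a neighbourhood $\mathcal{N}$ of $A$ in $\cpt(X)$ such that for every $B\in\mathcal{N}$ and every $x$ with $d(x,B)<\eta$ there is an arc of diameter $<\epsilon$ joining $x$ to a point of $B$. To prove it I would, for each $a\in A$, use local path-connectedness to pick an open path-connected set $W_a$ with $a\in W_a\subset B_d(a,\epsilon/3)$ together with $\delta_a>0$ satisfying $B_d(a,\delta_a)\subset W_a$; any two points of $W_a$ are then joined by a path in $W_a$, which reduces to an arc in $W_a$ of diameter $<\epsilon$ (recall that in a Hausdorff space a path between two distinct points contains an arc with the same endpoints). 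Compactness of $A$ yields finitely many $a_1,\dots,a_k$ with $A\subset\bigcup_j B_d(a_j,\delta_j/2)$, where $\delta_j=\delta_{a_j}$; set $\eta=\delta=\tfrac14\min_j\delta_j$ and $\mathcal{N}=\{B\in\cpt(X)\mid d_H(A,B)<\delta\}$. For $B\in\mathcal{N}$ and $d(x,B)<\eta$, the inclusion $B\subset N_d(A,\delta)$ gives $a\in A$ with $d(x,a)<\eta+\delta$, hence some $a_j$ with $x\in B_d(a_j,\delta_j)\subset W_{a_j}$; since $a_j\in A\subset N_d(B,\delta)$ there is $b\in B$ with $b\in W_{a_j}$ as well, and the arc in $W_{a_j}$ joining $x$ to $b$ is as desired.

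To globalize, for each $A$ I would apply the claim with $\epsilon=\alpha(A)/2$, obtaining $\eta_A>0$ and $\mathcal{N}_A$, and then shrink $\mathcal{N}_A$ using continuity of $\alpha$ so that $\alpha(B)>\alpha(A)/2$ for all $B\in\mathcal{N}_A$; thus every $x$ within $\eta_A$ of such a $B$ is joined to $B$ by an arc of diameter $<\alpha(A)/2<\alpha(B)$. Since $\cpt(X)$ is metrizable and therefore paracompact, choose a locally finite partition of unity $\{\mu_i\}_{i\in I}$ with $\operatorname{supp}\mu_i\subset\mathcal{N}_{A(i)}$ and set $\beta=\sum_i\mu_i\,\eta_{A(i)}$, a continuous positive function. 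Given $A$ and $x$ with $d(x,A)<\beta(A)$, pick an active index $i^\ast$ (that is, $\mu_{i^\ast}(A)>0$) maximizing $\eta_{A(i)}$; as $\beta(A)$ is a convex combination of the active values we get $d(x,A)<\beta(A)\le\eta_{A(i^\ast)}$ with $A\in\mathcal{N}_{A(i^\ast)}$, so $x$ is joined to $A$ by an arc of diameter $<\alpha(A)$.

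The main obstacle is the claim, and inside it the step transferring an arc that reaches the fixed set $A$ into one reaching the moving set $B$: local path-connectedness by itself only joins $x$ to a point of $A$, so the Hausdorff error must be absorbed by working inside a single patch $W_{a_j}$ that simultaneously contains $x$ and a suitable point of $B$. Producing one $\eta$ and one neighbourhood $\mathcal{N}$ valid uniformly over all nearby $B$ is exactly what the finite subcover supplies; once these local constants are in hand, the partition-of-unity globalization is routine.
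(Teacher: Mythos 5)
Your proof is correct: the perturbation-stable local claim (a single radius $\eta$ working for \emph{all} $B$ in a Hausdorff-metric neighbourhood of $A$, obtained from a finite subcover of path-connected patches $W_{a_j}$) is exactly the right uniformization, the arithmetic with $\eta=\delta=\tfrac14\min_j\delta_j$ checks out, and the partition-of-unity assembly with the maximal active index correctly exploits the monotonicity of the property in $\beta(A)$. The paper itself gives no proof of Lemma~\ref{arc} --- it defers to \cite{Cu1}, with detailed analogues in \cite{Kos8} --- and your argument is essentially the standard one used there (a lower semicontinuous positive radius function minorized by a continuous one, which is what your partition-of-unity construction implements), so there is no substantive difference in approach.
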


Now, we show the following:

\begin{prop}\label{lfap}
Let $X$ be a locally path-connected and nowhere locally compact space.
Then $\cpt(X)$ has the countable locally finite approximation property.
\end{prop}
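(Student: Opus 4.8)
The plan is to realize the countable locally finite approximation property through a single perturbation scheme for $\id : \cpt(X) \to \cpt(X)$ driven by the nowhere local compactness of $X$. First I would convert the open cover $\mathcal{U}$ into a quantitative gauge: choose a map $\alpha : \cpt(X) \to (0,\infty)$ so that for every $A$ the ball $B_{d_H}(A,\alpha(A))$ is contained in a single member of $\mathcal{U}$; then any $f_n$ with $d_H(f_n(A),A) < \alpha(A)$ is automatically $\mathcal{U}$-close to $\id$. Applying Lemma~\ref{arc} to $\alpha$ I obtain $\beta \le \alpha$ with the arc-joining property, so that any point within $\beta(A)$ of $A$ can be tied to $A$ by an arc of diameter $< \alpha(A)$. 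These two functions translate cover-theoretic closeness into Hausdorff estimates that the construction can track.

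Next I would use nowhere local compactness to manufacture \emph{room} near each compact set. For every $A$ the closure $\cl N_d(A,\beta(A))$ is non-compact, hence contains a uniformly separated sequence of points escaping every compact subset; joining the $n$-th such point to $A$ by a short arc $J_n^A$ via Lemma~\ref{arc} yields candidate sets $f_n(A) = A \cup J_n^A$ with $\diam_d J_n^A < \alpha(A)$, so that $d_H(f_n(A),A) < \alpha(A)$ and each $f_n$ is $\mathcal{U}$-close to $\id$. The index $n$ is arranged so that $f_n(A)$ carries a Hausdorff-visible feature, namely an attached arc reaching the $n$-th escaping point, which is proper in $n$.

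The central technical work is to make $A \mapsto f_n(A)$ continuous while keeping $\{f_n(\cpt(X))\}_{n<\omega}$ locally finite. For continuity I would not define the perturbation pointwise but over a simplicial parameter: triangulate finely so that, by Lemma~\ref{subd.2}, the oscillation of $\alpha$ and $\beta$ on each simplex is as small as desired, and by Lemma~\ref{subd.1} the images of simplices have controlled diameter; the escaping points and connecting arcs are then prescribed at vertices and interpolated across simplices, the diameter control guaranteeing that the interpolation remains within the gauge. For local finiteness I would fix $B \in \cpt(X)$ and a small $d_H$-ball $V$ about it and argue by contradiction with Lemma~\ref{subseq.}: if images of infinitely many $f_n$ met $V$, one extracts sets $A_k$ with $f_{n_k}(A_k) \to L \in \cl V$; then Lemma~\ref{subseq.} forces the attached arcs to subconverge to a compact subset of $L$, contradicting that their endpoints form an escaping, uniformly separated family.

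The step I expect to be the main obstacle is exactly this simultaneous demand. The $\mathcal{U}$-closeness pins every $f_n(A)$ to within $\alpha(A)$ of $A$, so all images cluster near the diagonal, whereas local finiteness requires them to spread out as $n$ grows. Reconciling the two is possible only because the escape supplied by nowhere local compactness is Hausdorff-detectable and proper in $n$; organizing these escaping configurations continuously in $A$, so that the perturbation neither collapses (destroying local finiteness) nor exceeds the gauge (destroying $\mathcal{U}$-closeness), is the crux of the argument and is precisely where the subdivision Lemmas~\ref{subd.1} and~\ref{subd.2} together with the convergence Lemma~\ref{subseq.} become indispensable.
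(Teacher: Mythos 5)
Your plan correctly identifies the gauges $\alpha$, $\beta$ and the role of Lemmas~\ref{arc}, \ref{subd.1}, \ref{subd.2} and \ref{subseq.}, but it has a structural gap at the very start: you propose to ``triangulate finely'' and interpolate over a simplicial parameter, yet $\cpt(X)$ is not a polyhedron, and Lemmas~\ref{subd.1} and \ref{subd.2} apply to maps defined on simplicial complexes; as written, your perturbation has no domain. The paper's proof begins with the step you omit: since $X$ is locally path-connected, $\cpt(X)$ is an ANR, so after passing to a star-refinement $\mathcal{V}$ of $\mathcal{U}$ there exist a simplicial complex $K$ and maps $f : \cpt(X) \to |K|$ and $g : |K| \to \cpt(X)$ with $gf$ $\mathcal{V}$-close to the identity. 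One then subdivides $K$ so that $\diam_{d_H} g(\sigma)$ and the oscillations of $\beta g$ and $\alpha g$ on each simplex are controlled, perturbs $g$ to countably many maps $g_i$, and takes $\{g_i f\}_{i < \omega}$ as the desired sequence. Moreover, ``interpolation across simplices'' is not automatic in a hyperspace, which has no linear structure: the paper defines $g_i$ on the $1$-skeleton using paths of the form $t \mapsto g(\hat\sigma) \cup \{\gamma_m(t)\}$ and extends over higher skeleta by the union trick via the map $r : \sigma \to \fin(\partial\sigma)$ of Lemma~3.3 of \cite{CN}; your plan needs this (or an equivalent device) even to define the extension.

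The second, more serious gap is in your local finiteness mechanism. You choose, separately for each $A$, an escaping sequence in $\cl N_d(A,\beta(A))$ and attach its $n$-th point; but that sequence escapes compacta only in $n$ for \emph{fixed} $A$. Nothing rules out a diagonal collapse: for $A_k \to B$ and $n_k \to \infty$, an adversarial assignment can place the $n_k$-th point chosen for $A_k$ within $1/k$ of a fixed point $z$, so that $f_{n_k}(A_k) = A_k \cup J_{n_k}^{A_k}$ converges in $d_H$ and local finiteness fails; your appeal to Lemma~\ref{subseq.} yields only that the attached arcs subconverge, which contradicts nothing, since each family escapes only along its own index. The paper coordinates the choices globally: for each $n$ it fixes a locally finite open cover $\mathcal{V}_n$ of mesh $< 1/n$ and, in each non-empty $V \in \mathcal{V}_n$, a discrete infinite set $Z(V) = \{z_V^i\}$, adjusted so that the slices $Z^i(n)$ are pairwise disjoint in $i$; to a vertex $v$ it attaches a point $z^i(v) \in Z^i(n_v)$ at the scale $n_v$ determined by $1/n_v < \beta g(v)/4 \leq 1/(n_v - 1)$ (note that the index $i$ of the map selects a disjoint slice rather than encoding ``escape depth''). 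Local finiteness then follows from a scale dichotomy absent from your plan: if $g_i(y_i)$ accumulated at some $A$, then $g(y_i) \to B \subset A$ by Lemma~\ref{subseq.}, so $\beta g(y_i) \to \beta(B) > 0$; if the scales $n_{v_i}$ stayed bounded, infinitely many pairwise distinct points of a single locally finite set $Z(n_0)$ would accumulate, which is impossible, so $n_{v_i} \to \infty$, whence $\beta g(v_i) \to 0$ and, by the oscillation condition (2), $\beta g(y_i) \to 0$ --- a contradiction. Without tying the attached points to globally locally finite nets at $\beta$-determined scales, your construction can meet the gauge $\alpha$ and still fail to produce a locally finite family of images.
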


\begin{proof}
Let $\mathcal{U}$ be an open cover of $\cpt(X)$.
Take an open cover $\mathcal{V}$ of $\cpt(X)$ that is a star-refinement of $\mathcal{U}$.
Since $\cpt(X)$ is an ANR by Theorem~1.6 of \cite{Cu2},
 there are a simplicial complex $K$ and maps $f : \cpt(X) \to |K|$, $g : |K| \to \cpt(X)$ such that $gf$ is $\mathcal{V}$-close to the identity map on $\cpt(X)$, refer to \cite[Theorem~6.6.2]{Sa6}.
It suffices to construct a map $g_i : |K| \to \cpt(X)$ for each $i < \omega$ so that $g_i$ is $\mathcal{V}$-close to $g$ and $\{g_i(|K|)\}_{i < \omega}$ is locally finite in $\cpt(X)$.
Then $\{g_if\}_{i < \omega}$ will be the desired sequence of maps.

Take an admissible metric $d$ on $X$ and a map $\alpha : \cpt(X) \to (0,1)$ so that the family $\{B_{d_H}(A,2\alpha(A)) \mid A \in \cpt(X)\}$ refines $\mathcal{V}$.
Since $X$ is locally path-connected,
 according to Lemma~\ref{arc}, there exists a map $\beta : \cpt(X) \to (0,1)$ such that for every $A \in \cpt(X)$, each point $x \in N_d(A,\beta(A))$ has an arc from some point of $A$ to $x$ of diameter $< \alpha(A)/2$.
We may assume that $\beta(A) \leq \alpha(A)/2$ for every $A \in \cpt(X)$.
Combining Lemmas~\ref{subd.1} with \ref{subd.2}, we can replace $K$ with a subdivision so that for every $\sigma \in K$,
\begin{enumerate}
 \item $\diam_{d_H}g(\sigma) < \inf_{y \in \sigma} \beta g(y)/2$,
 \item $\sup_{y \in \sigma} \beta g(y) < 2\inf_{y \in \sigma} \beta g(y)$,
 \item $\sup_{y \in \sigma} \alpha g(y) < 4\inf_{y \in \sigma} \alpha g(y)/3$.
\end{enumerate}
For each $n < \omega$, we can find a locally finite open cover $\mathcal{V}_n$ of $X$ of mesh $< 1/n$.
Since $X$ is nowhere locally compact,
 for every $n < \omega$ and $\emptyset \neq V \in \mathcal{V}_n$, $V$ contains an infinite subset $Z(V) = \{z_V^i \mid i < \omega\}$ that is discrete in $X$.
Let $Z^i(n) = \{z_V^i \mid V \in \mathcal{V}_n\}$, $i < \omega$.
Here we may assume that $Z^i(n) \cap Z^j(n) = \emptyset$ if $i \neq j$.
Indeed, it will be shown by induction.
Suppose that for some $i < \omega$, $Z^j(n) \cap Z^k(n) = \emptyset$ if $j < k < i$.
By the local finiteness of $\mathcal{V}_n$, for every $z_V^i \in Z^i(n)$, the family $\{V' \in \mathcal{V}_n \mid z_V^i \in V'\}$ is finite.
Since $X$ is nowhere locally compact and each $Z(V)$ is discrete,
 we can find a point $x_V^i \in V$ sufficiently close to $z_V^i$ so that $x_V^i \notin \bigcup_{j < i} Z^j(n)$ and even if $z_V^i$ is substituted by $x_V^i$,
 $Z(V)$ is still discrete.
Due to this substitution, we have that $Z^j(n) \cap Z^i(n) = \emptyset$ for all $j < i$.
Put $Z(n) = \bigcup_{\emptyset \neq V \in \mathcal{V}_n} Z(V) = \bigoplus_{i < \omega} Z^i(n)$,
 so it is locally finite in $X$.

First, we shall construct the restriction $g_i|_{K^{(0)}}$, $i < \omega$.
For every $v \in K^{(0)}$, there is $n_v \geq 2$ such that $1/n_v < \beta g(v)/4 \leq 1/(n_v - 1)$.
Then we can find a point $z^i(v) \in Z^i(n_v) \subset Z(n_v)$, $i < \omega$, so that $d(z^i(v),g(v)) < 1/n_v$.
Note that for any $v', v'' \in K^{(0)}$ with $n_{v'} = n_{v''}$, $z^i(v') \neq z^j(v'')$ if $i \neq j$.
Let $g_i(v) = g(v) \cup \{z^i(v)\} \in \cpt(X)$,
 so
 $$d_H(g(v),g_i(v)) < 1/n_v < \beta g(v)/4 \leq \alpha g(v)/2.$$

Next, we will extend each $g_i$ over the $1$-skeleton $|K^{(1)}|$.
Let $\sigma \in K^{(1)} \setminus K^{(0)}$, $\sigma^{(0)} = \{v_1, v_2\}$ and $\hat\sigma$ be its barycenter.
According to conditions (1) and (2), we get for any $m = 1, 2$,
\begin{align*}
 d(z^i(v_m),g(\hat\sigma)) &\leq d(z^i(v_m),g(v_m)) + d_H(g(v_m),g(\hat\sigma)) < \beta g(v_m)/4 + \diam_{d_H}g(\sigma)\\
 &< \sup_{y \in \sigma} \beta g(y)/4 + \inf_{y \in \sigma} \beta g(y)/2 < \inf_{y \in \sigma} \beta g(y) \leq \beta g(\hat\sigma).
\end{align*}
Applying Lemma~\ref{arc}, we can take an arc $\gamma_m : \I \to X$ from some point of $g(\hat\sigma)$ to $z^i(v_m)$ of $\diam_d{\gamma_m(\I)} < \alpha g(\hat\sigma)/2$.
Put $g_i(\hat\sigma) = g(\hat\sigma) \cup \{z^i(v_m) \mid m = 1, 2\}$.
Then $d_H(g(\hat\sigma),g_i(\hat\sigma)) \leq \beta g(\hat\sigma) \leq \alpha g(\hat\sigma)/2$.
Let $\phi : \I \to \cpt(X)$ be a map defined by $\phi(t) = g(\hat\sigma) \cup \{\gamma_m(t) \mid m = 1, 2\}$,
 which is a path from $g(\hat\sigma)$ to $g_i(\hat\sigma)$.
For each $m = 1, 2$, define $g_i : \langle v_m,\hat\sigma \rangle \to \cpt(X)$,
 where $\langle v_m,\hat\sigma \rangle$ is the segment between $v_m$ and $\hat\sigma$,
 as follows:
 $$g_i((1 - t)v_m + t\hat\sigma) = \left\{
 \begin{array}{ll}
  g((1 - 2t)v_m + 2t\hat\sigma) \cup \{z^i(v_m)\} &\text{if } 0 \leq t \leq 1/2,\\
  \phi(2t - 1) \cup \{z^i(v_m)\} &\text{if } 1/2 \leq t \leq 1.
 \end{array}
 \right.$$
Then for each $y \in \sigma$, if $y = (1 - t)v_m + t\hat\sigma$, $0 \leq t \leq 1/2$,
\begin{align*}
 d_H(g(\hat\sigma),g_i(y)) &\leq \max\{d_H(g(\hat\sigma),g((1 - 2t)v_m + 2t\hat\sigma)),d(g(\hat\sigma),z^i(v_m))\}\\
 &\leq \max\{\diam_{d_H}g(\sigma),\beta g(\hat\sigma)\} \leq \max\{\inf_{y' \in \sigma} \beta g(y')/2,\beta g(\hat\sigma)\}\\
 &\leq \beta g(\hat\sigma) \leq \alpha g(\hat\sigma)/2,
\end{align*}
 and if $y = (1 - t)v_m + t\hat\sigma$, $1/2 \leq t \leq 1$,
\begin{align*}
 d_H(g(\hat\sigma),g_i(y)) &\leq \max\{d_H(g(\hat\sigma),\phi(2t - 1)),d(g(\hat\sigma),z^i(v_m))\}\\
 &\leq \max\{\max\{\diam_d\gamma_n(\I) \mid n = 1, 2\},\beta g(\hat\sigma)\}\\
 &\leq \max\{\alpha g(\hat\sigma)/2,\beta g(\hat\sigma)\} \leq \alpha g(\hat\sigma)/2.
\end{align*}
It follows from condition (3) that
\begin{align*}
 d_H(g(y),g_i(y)) &\leq d_H(g(y),g(\hat\sigma)) + d_H(g(\hat\sigma),g_i(y)) \leq \diam_{d_H}g(\sigma) + \alpha g(\hat\sigma)/2\\
 &< \inf_{y' \in \sigma} \beta g(y')/2 + \alpha g(\hat\sigma)/2 \leq \beta g(\hat\sigma)/2 + \alpha g(\hat\sigma)/2 \leq 3\alpha g(\hat\sigma)/4\\
 &\leq 3\sup_{y' \in \sigma} \alpha g(y')/4 < \inf_{y' \in \sigma} \alpha g(y') \leq \alpha g(y).
\end{align*}
Note that each $g_i(y)$ contains $z^i(v_1)$ or $z^i(v_2)$.

By induction, we shall construct a map $g_i : |K| \to \cpt(X)$, $i < \omega$, such that for each $\sigma \in K \setminus K^{(0)}$ and each $y \in \sigma$, $g_i(y) = \bigcup_{a \in A(y)} g_i(a)$ for some $A(y) \in \fin(|\sigma^{(1)}|)$.
Assume that $g_i$ has extended over $|K^{(n)}|$ for some $n < \omega$ such that for every $\sigma \in K^{(n)} \setminus K^{(0)}$ and $y \in \sigma$, $g_i(y) = \bigcup_{a \in A(y)} g_i(a)$ for some $A(y) \in \fin(|\sigma^{(1)}|)$.
Take any $n + 1$-simplex $\sigma \in K^{(n + 1)} \setminus K^{(n)}$.
Due to Lemma~3.3 of \cite{CN}, there exists a map $r : \sigma \to \fin(\partial\sigma)$ such that $r(y) = \{y\}$ for all $y \in \partial\sigma$,
 where $\partial{\sigma}$ means the boundary of $\sigma$.
The restriction $g_i|_{\partial\sigma}$ induces $\tilde{g_i} : \fin(\partial\sigma) \to \cpt(X)$ defined by $\tilde{g_i}(A) = \bigcup_{a \in A} g_i(a)$.
Then the composition $g_{i,\sigma} = \tilde{g_i}r : \sigma \to \cpt(X)$ satisfies that $g_{i,\sigma}|_{\partial\sigma} = g_i|_{\partial\sigma}$.
Observe that for each $y \in \sigma$,
 $$g_{i,\sigma}(y) = \tilde{g_i}r(y) = \bigcup_{y' \in r(y)} g_i(y') = \bigcup_{y' \in r(y)} \bigcup_{a \in A(y')} g_i(a) = \bigcup_{a \in \bigcup_{y' \in r(y)} A(y')} g_i(a),$$
 where $g_i(y') = \bigcup_{a \in A(y')} g_i(a)$ for some $A(y') \in \fin(|\sigma^{(1)}|)$ by the inductive assumption.
Therefore $g_i$ can be extended over $|K^{(n + 1)}|$ by $g_i|_\sigma = g_{i,\sigma}$ for all $\sigma \in K^{(n + 1)} \setminus K^{(n)}$.

Completing this induction, we can obtain a map $g_i : |K| \to \cpt(X)$ for every $i < \omega$.
For each $y \in \sigma \in K \setminus K^{(0)}$ and each $a \in |\sigma^{(1)}|$, by conditions (1) and (3), we have
\begin{align*}
 d_H(g(y),g_i(a)) &\leq d_H(g(y),g(a)) + d_H(g(a),g_i(a)) < \diam_{d_H}g(\sigma) + \alpha g(a)\\
 &< \inf_{y' \in \sigma} \beta g(y')/2 + \sup_{y' \in \sigma} \alpha g(y') \leq \inf_{y' \in \sigma} \alpha g(y')/4 + 4\inf_{y' \in \sigma} \alpha g(y')/3\\
 &= 19\inf_{y' \in \sigma} \alpha g(y')/12 < 2\alpha g(y).
\end{align*}
It follows that
 $$d_H(g(y),g_i(y)) = d_H\Bigg(g(y),\bigcup_{a \in \bigcup_{y' \in r(y)} A(y')} g_i(a)\Bigg) \leq \max_{a \in \bigcup_{y' \in r(y)} A(y')} d_H(g(y),g_i(a)) < 2\alpha g(y),$$
 which implies that $g_i$ is $\mathcal{V}$-close to $g$.
Remark that $z^i(v) \in g_i(y)$ for some vertex $v \in \sigma^{(0)}$.
Here we may replace $g_i(y)$ with the union $g(y) \cup g_i(y)$ for every $y \in |K|$,
 so we get $g(y) \subset g_i(y)$.
It remains to prove that $\{g_i(|K|)\}_{i < \omega}$ is locally finite in $\cpt(X)$.

Suppose the contrary.
Then we can find a subsequence $\{g_{n_i}\}_{i < \omega}$ of $\{g_i\}_{i < \omega}$ such that $\{g_{n_i}(y_i)\}_{i < \omega}$, $y_i \in |K|$, is converging to some $A \in \cpt(X)$.
For simplicity, replace each $g_{n_i}$ with $g_i$.
Take the carrier $\sigma_i \in K$ of $y_i$ and choose a vertex $v_i \in \sigma_i^{(0)}$ so that $z^i(v_i) \in g_i(y_i)$.
Since $g(y_i) \subset g_i(y_i)$,
 replacing $\{g(y_i)\}_{i < \omega}$ with a subsequence, we can obtain a compact subset $B \subset A$ to which $\{g(y_i)\}_{i < \omega}$ converges by Lemma~\ref{subseq.}.
Thus $\{\beta g(y_i)\}_{i < \omega}$ converges to $\beta(B) > 0$.
On the other hand, since $\{g_i(y_i)\}_{i < \omega}$ converges to $A$,
 any subsequence of $\{z^i(v_i)\}_{i < \omega}$ has an accumulation point in $A$.
Then $\{n_{v_i}\}_{i < \omega}$ diverges to $\infty$.
Indeed, supposing the contrary, we can find $n_0 < \omega$ and replace $\{n_{v_i}\}_{i < \omega}$ with a subsequence so that $n_{v_i} = n_0$ for all $i < \omega$.
By the choice of $z^i(v_i)$, $\{z^i(v_i)\}_{i < \omega}$ is pairwise distinct and contained in the locally finite subset $Z(n_0)$,
 which is a contradiction.
Hence $\{\beta g(v_i)\}_{i < \omega}$ converges to $0$ because $1/n_{v_i} < \beta g(v_i)/4 \leq 1/(n_{v_i} - 1)$.
Moreover, by condition (2), for every $i < \omega$,
 $$\beta g(y_i) \leq \sup_{y \in \sigma_i} \beta g(y) < 2\inf_{y \in \sigma_i} \beta g(y) \leq 2\beta g(v_i),$$
 so $\{\beta g(y_i)\}_{i < \omega}$ also converges to $0$.
This is a contradiction.
Consequently, $\{g_i(|K|)\}_{i < \omega}$ is locally finite in $\cpt(X)$.
\end{proof}

\section{The $\kappa$-discrete $n$-cells property of $\cpt(X)$}

This section is devoted to the verification of the $\kappa$-discrete $n$-cells property in $\cpt(X)$.
To detect the $\kappa$-discrete $n$-cells property in a space, the following two lemmas are useful.

\begin{lem}\cite[Lemma~3.1]{BZ}\label{loc.fin.}
Let $n < \omega$.
A space $X$ has the $\kappa$-discrete $n$-cells property if and only if for each open cover $\mathcal{U}$ of $X$, and each map $f : \bigoplus_{\gamma < \kappa} A_\gamma \to X$, where each $A_\gamma = \I^n$,
 there is a map $g : \bigoplus_{\gamma < \kappa} A_\gamma \to X$ such that $g$ is $\mathcal{U}$-close to $f$ and $\{g(A_\gamma)\}_{\gamma < \kappa}$ is locally finite in $X$.
\end{lem}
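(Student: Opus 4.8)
The plan is to prove the non-trivial implication; the converse is immediate, since a discrete family is automatically locally finite, so the very same $g$ witnesses both conditions. Assume then that $X$ satisfies the locally finite variant, fix an admissible metric $d$, an open cover $\mathcal{U}$, and a map $f : \bigoplus_{\gamma < \kappa} A_\gamma \to X$ with each $A_\gamma = \I^n$. First I would fix a decreasing sequence of open covers $\mathcal{U} = \mathcal{U}_0$ with each $\mathcal{U}_{k+1}$ a star-refinement of $\mathcal{U}_k$, chosen fine enough that any map obtained from $f$ by successive $\mathcal{U}_k$-small corrections remains $\mathcal{U}$-close to $f$. Applying the hypothesis to $f$ and $\mathcal{U}_1$ yields a map $g_0$ that is $\mathcal{U}_1$-close to $f$ and whose image family $\{g_0(A_\gamma)\}_{\gamma < \kappa}$ is locally finite; in particular $\bigcup_\gamma g_0(A_\gamma)$ is closed.

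The next step is to reduce local finiteness to $\sigma$-discreteness by a colouring argument. Using local finiteness, choose an open cover $\mathcal{W}$ refining $\mathcal{U}_1$ such that each $W \in \mathcal{W}$ meets only finitely many of the sets $g_0(A_\gamma)$, and define a graph on the vertex set $\kappa$ by linking $\gamma$ and $\delta$ whenever some $W \in \mathcal{W}$ meets both $g_0(A_\gamma)$ and $g_0(A_\delta)$. Since $g_0(A_\gamma)$ is compact it is covered by finitely many members of $\mathcal{W}$, each of which meets finitely many images, so every vertex has finite degree. A greedy colouring along any well-ordering of $\kappa$ therefore uses only colours from $\omega$, giving a partition $\kappa = \bigsqcup_{k < \omega} \kappa_k$ in which no $W \in \mathcal{W}$ meets two images with indices in the same class. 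As $\mathcal{W}$ covers $X$, each class $\{g_0(A_\gamma)\}_{\gamma \in \kappa_k}$ is then \emph{discrete}, so the whole family has been displayed as a countable union of discrete families.

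It remains to merge these countably many discrete classes into a single discrete family, and this is the step I expect to be the main obstacle. The plan is an induction on $k$: having repositioned the classes $\kappa_0, \dots, \kappa_{k-1}$ into a discrete family with closed union $C_{k-1}$, I would correct the current map, by a $\mathcal{U}_{k+1}$-small move affecting only the cells of class $\kappa_k$, so that the resulting images of class $\kappa_k$ remain discrete and are pushed off $C_{k-1}$, leaving the cells of all other classes untouched. Because a cell is moved at exactly the one stage matching its colour, each $g(A_\gamma)$ is eventually constant in the process; hence the pointwise limit $g$ is a well-defined map into $X$ \emph{without any appeal to completeness} of $X$, and the control on the covers $\mathcal{U}_k$ keeps $g$ $\mathcal{U}$-close to $f$.

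The delicate point, and the crux of the whole argument, is to realise each stage so that the limiting family is genuinely discrete rather than merely $\sigma$-discrete; in particular, securing from the locally finite hypothesis a \emph{small} correction that actually separates class $\kappa_k$ from the fixed closed set $C_{k-1}$ is the heart of the matter. For this I would maintain as an inductive invariant a locally finite open cover certifying both that $\bigcup_{j \le k} \{g(A_\gamma)\}_{\gamma \in \kappa_j}$ is discrete and that the classes not yet treated remain locally finite; the governing estimate is that every point of $X$ has a neighbourhood meeting only finitely many images in total, hence images from only finitely many classes, so all of them become mutually separated once their finitely many colours have been processed. Arranging simultaneously that the stage-$k$ correction is small enough to preserve the separations already achieved and not to destroy local finiteness in the limit is where the genuine work lies, and I would control it by the rapid decay of the star-refinements $\mathcal{U}_k$ together with the one-move-per-cell feature.
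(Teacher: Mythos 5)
You should know at the outset that the paper itself contains no proof of this statement: it is quoted verbatim from \cite{BZ} (Lemma~3.1 there), so your proposal can only be judged on its own merits. On those merits, the first two-thirds are sound. The converse direction is indeed immediate (a discrete family is locally finite). Your reduction to $\sigma$-discreteness is also correct: local finiteness gives a cover $\mathcal{W}$ each of whose members meets only finitely many images; compactness of each $g_0(A_\gamma)$ makes the proximity graph have finite degrees; and a greedy colouring along \emph{any} well-ordering of $\kappa$ needs only $\omega$ colours, since a vertex of finite degree has only finitely many earlier neighbours. That no $W \in \mathcal{W}$ meets two images of the same colour does witness discreteness of each class $\{g_0(A_\gamma)\}_{\gamma \in \kappa_k}$. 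It is also worth noting (and you implicitly use this) that a pairwise disjoint locally finite family of compacta is automatically discrete, so \emph{disjointification} is the only thing separating what you have from what you need.

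But that is exactly where the proposal has a genuine gap, and you flag it yourself without closing it: the stage-$k$ step of the merging induction --- a $\mathcal{U}_{k+1}$-small correction of the class-$\kappa_k$ cells that pushes their images off the closed set $C_{k-1}$ while keeping the class discrete --- is asserted, never constructed. Nothing in the hypothesis supplies it. The locally finite approximation property can of course be re-applied to the sub-family of class-$\kappa_k$ cells (padding the index set up to $\kappa$), but its output is only \emph{another} locally finite family of images, with no control whatsoever on avoiding a prescribed closed set; the new images may perfectly well still meet $C_{k-1}$. ``Move a compact image off a given closed set by an arbitrarily small homotopy'' is precisely the disjointifying flexibility that the $\kappa$-discrete cells property encodes, so taking it as an available move is circular: the entire content of the lemma is concentrated in this unproved step, and as written your argument only shows that the images can be made locally finite and $\sigma$-discrete. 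A secondary, fixable defect: to keep the limit family locally finite you would need the covers governing the later stages to be chosen \emph{after} seeing $g_0$, small relative to neighbourhoods witnessing the local finiteness of $\{g_0(A_\gamma)\}_{\gamma<\kappa}$, whereas you fix the whole sequence $\mathcal{U}_k$ in advance of any application of the hypothesis. To repair the main gap you need a mechanism that extracts disjointness from the hypothesis itself rather than from ad hoc pushes --- for instance, applying the locally finite approximation to a family in which each cell $A_\gamma$ is repeated $\omega$ times (note $\kappa\cdot\omega=\kappa$) and then selecting one representative per $\gamma$ so that the chosen images are pairwise disjoint; but the selection is itself a nontrivial combinatorial step, and until some such device is carried out in full the proposal does not prove the lemma.
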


\begin{lem}\cite[Lemma~3.2]{BZ}\label{cof.}
Let $X$ be a space with the countable locally finite approximation property and $n < \omega$.
The space $X$ has the $\kappa$-discrete $n$-cells property if and only if it has the $\lambda$-discrete $n$-cells property for every $\lambda \leq \kappa$ of uncountable cofinality.
\end{lem}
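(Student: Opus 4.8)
The plan is to prove the two implications separately, in each case passing through Lemma~\ref{loc.fin.} so that I may work throughout with \emph{locally finite} families of images rather than discrete ones, converting back to discreteness only at the very end. Thus ``$\lambda$-discrete $n$-cells property'' will always be used in the equivalent locally finite form.

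The forward implication is the easy one and needs neither the cofinality restriction nor the approximation property. Given a map $f : \bigoplus_{\gamma < \lambda} A_\gamma \to X$ with $\lambda \leq \kappa$ and an open cover $\mathcal{U}$, I would pad $f$ to a map $\tilde{f} : \bigoplus_{\gamma < \kappa} A_\gamma \to X$ by defining $\tilde{f}$ to be a constant (or $f|_{A_0}$) on each extra cell; continuity is automatic on a topological sum. Applying the $\kappa$-discrete $n$-cells property to $\tilde{f}$ and $\mathcal{U}$ yields $g$ that is $\mathcal{U}$-close to $\tilde f$ with $\{g(A_\gamma)\}_{\gamma < \kappa}$ discrete; restricting to $\gamma < \lambda$ gives the desired approximation, since a subfamily of a discrete family is discrete.

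For the converse I would split on $\cf(\kappa)$. If $\cf(\kappa)$ is uncountable, then $\kappa \leq \kappa$ itself has uncountable cofinality and the $\kappa$-discrete $n$-cells property is part of the hypothesis, so there is nothing to do. The substance is the case $\cf(\kappa) = \omega$: write $\kappa = \sup_{m < \omega} \lambda_m$ with $\lambda_m$ strictly increasing and each $\lambda_m$ a regular uncountable cardinal, and fix a partition $\kappa = \bigsqcup_{m < \omega} \Gamma_m$ with $\card \Gamma_m = \lambda_m$. Given $f : \bigoplus_{\gamma < \kappa} A_\gamma \to X$ and an open cover $\mathcal{U}$, I would use the countable locally finite approximation property to produce a sequence $\{h_m : X \to X\}_{m < \omega}$ of maps, each $\mathcal{V}$-close to $\id_X$, with $\{h_m(X)\}_{m < \omega}$ locally finite, where $\mathcal{V}$ is a star-refinement of $\mathcal{U}$ chosen fine enough (see below). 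For each $m$, applying the $\lambda_m$-discrete $n$-cells property to the composite $h_m \circ f|_{\bigoplus_{\gamma \in \Gamma_m} A_\gamma}$ yields $g_m : \bigoplus_{\gamma \in \Gamma_m} A_\gamma \to X$ that is $\mathcal{V}$-close to $h_m \circ f$ with $\{g_m(A_\gamma)\}_{\gamma \in \Gamma_m}$ locally finite. I would then assemble the $g_m$ into a single map $g$ with $g|_{\bigoplus_{\Gamma_m} A_\gamma} = g_m$. Closeness is immediate: on $\Gamma_m$ the map $g_m$ is $\mathcal{V}$-close to $h_m \circ f$ and $h_m$ is $\mathcal{V}$-close to $\id_X$, so $g$ is $\mathcal{U}$-close to $f$ because $\mathcal{V}$ star-refines $\mathcal{U}$. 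When $\kappa = \omega$ the groups degenerate to singletons and no $\lambda_m$-property is needed at all: one simply sets $g|_{A_m} = h_m \circ f|_{A_m}$, placing one cell in each slab.

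The main obstacle is the remaining claim, that $\{g(A_\gamma)\}_{\gamma < \kappa}$ is locally finite; this is exactly the problem of amalgamating countably many locally finite families, which is not locally finite in general. The idea is that the slabs $\{h_m(X)\}$ separate the groups. Since $g_m$ is $\mathcal{V}$-close to $h_m \circ f$, one has $g_m(A_\gamma) \subset \St(h_m(X),\mathcal{V})$ for every $\gamma \in \Gamma_m$, so it suffices to show that the fattened slabs $S_m := \St(h_m(X),\mathcal{V})$ form a locally finite family: granting this, at any point $x$ a neighborhood $O$ meets only finitely many $S_m$, say $m \in F$, whence $O$ can meet $g(A_\gamma)$ only for $\gamma$ in those $\Gamma_m$ with $m \in F$, and then intersecting $O$ with the finitely many witnessing neighborhoods coming from the intra-group local finiteness of each $\{g_m(A_\gamma)\}_{\gamma \in \Gamma_m}$ leaves only finitely many cells. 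To secure local finiteness of $\{S_m\}$, I would choose $\mathcal{V}$ at the outset to be a star-refinement of an open cover $\mathcal{W}$ each of whose members meets only finitely many $h_m(X)$ (such $\mathcal{W}$ exists precisely because $\{h_m(X)\}$ is locally finite). Then for a fixed $V \in \mathcal{V}$ containing $x$, any $V'$ meeting $V$ lies in $\St(V,\mathcal{V}) \subset W$ for some $W \in \mathcal{W}$, so $V \cap S_m \neq \emptyset$ forces $W \cap h_m(X) \neq \emptyset$, which happens for only finitely many $m$; thus $V$ is a neighborhood of $x$ meeting finitely many $S_m$. Having produced a $\mathcal{U}$-close $g$ with $\{g(A_\gamma)\}_{\gamma < \kappa}$ locally finite, I would invoke Lemma~\ref{loc.fin.} once more to conclude that $X$ has the $\kappa$-discrete $n$-cells property.
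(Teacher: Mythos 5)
This lemma is not proved in the paper at all --- it is imported verbatim from \cite[Lemma~3.2]{BZ} --- so there is no in-paper proof to compare against; your proposal must be judged on its own, and in substance it reconstructs exactly the Banakh--Zarichnyy argument. The skeleton is sound: reduce discreteness to local finiteness via Lemma~\ref{loc.fin.}; note the forward implication is a trivial padding-and-restriction; dispose of $\kappa$ of uncountable cofinality by taking $\lambda=\kappa$ in the hypothesis; and in the singular case of countable cofinality write $\kappa=\sup_{m<\omega}\lambda_m$ with each $\lambda_m$ regular uncountable (legitimate, since such a $\kappa$ is a limit cardinal, so one may take successor cardinals $\geq\omega_1$ below $\kappa$), split the $\kappa$ cells into blocks of size $\lambda_m$, push the $m$-th block near the slab $h_m(X)$ coming from the countable locally finite approximation property, apply the $\lambda_m$-discrete property within each block, and keep the blocks apart by showing the fattened slabs $\St(h_m(X),\mathcal{V})$ form a locally finite family. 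Your slab computation, the star-refinement bookkeeping for $\mathcal{U}$-closeness, and the degenerate treatment of $\kappa=\omega$ (one cell per slab, no $\lambda$-property needed) are all correct.

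One slip needs repair: as written, your choice of covers is circular. You stipulate that $\mathcal{V}$ star-refine a cover $\mathcal{W}$ each of whose members meets only finitely many sets $h_m(X)$, yet the maps $h_m$ are produced as $\mathcal{V}$-close to $\id_X$, i.e.\ only after $\mathcal{V}$ has been fixed. The standard two-stage fix: first take a star-refinement $\mathcal{V}_0$ of $\mathcal{U}$ and obtain $\{h_m\}_{m<\omega}$ that are $\mathcal{V}_0$-close to $\id_X$ with $\{h_m(X)\}_{m<\omega}$ locally finite; only then choose $\mathcal{W}$ so that each member meets finitely many $h_m(X)$, and let $\mathcal{V}$ be a common refinement of $\mathcal{V}_0$ and a star-refinement of $\mathcal{W}$, used solely for the approximations $g_m$ of $h_m\circ f$ and for the slabs $S_m=\St(h_m(X),\mathcal{V})$. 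Since each $g_m$ is $\mathcal{V}$-close, hence $\mathcal{V}_0$-close, to $h_m f$, and $h_m$ is $\mathcal{V}_0$-close to $\id_X$, the star-refinement of $\mathcal{U}$ by $\mathcal{V}_0$ still yields $\mathcal{U}$-closeness of the assembled $g$ to $f$, and your local finiteness argument for $\{S_m\}_{m<\omega}$ goes through verbatim. With that reordering the proof is complete and agrees with the cited source's method.
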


The following lemma can be easily observed, refer to the proof of \cite[Lemma~6.2]{BZ}:

\begin{lem}\label{discr.}
Let $X$ be a space and $\kappa$ be of uncountable cofinality.
If a subset $A \subset X$ is of density $\geq \kappa$,
 then $A$ contains a discrete subset of cardinality $\geq \kappa$.
\end{lem}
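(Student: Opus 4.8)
The plan is to realize the desired discrete set as a maximal separated net at a suitable scale, exploiting the metrizability of $X$. First I would fix an admissible metric $d$ on $X$ and, for each $n < \omega$, choose by Zorn's Lemma a maximal subset $D_n \subset A$ that is $1/n$-separated, meaning $d(x,y) \geq 1/n$ for all distinct $x,y \in D_n$. Maximality forces $A \subset N_d(D_n,1/n)$, so each $D_n$ is $1/n$-dense in $A$. Letting $n$ range over $\omega$, it follows that for every $a \in A$ and every $\epsilon > 0$, choosing $n$ with $1/n < \epsilon$ produces a point of $D_n$ within $\epsilon$ of $a$, whence the union $D = \bigcup_{n < \omega} D_n$ is dense in $A$. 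Since $A$ has density $\geq \kappa$, this gives $\card D \geq \kappa$.

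The heart of the argument is to pass from ``$D$ is large'' to ``some single $D_n$ is large'', and this is exactly where the cofinality hypothesis enters. I would argue by contradiction: suppose $\card D_n < \kappa$ for every $n < \omega$. Because $\kappa$ has uncountable cofinality, no family of cardinals below $\kappa$ indexed by $\omega$ is cofinal in $\kappa$, so $\sup_{n < \omega} \card D_n < \kappa$; and since uncountable cofinality also forces $\kappa > \aleph_0$, we obtain $\card D \leq \max\{\aleph_0, \sup_{n < \omega} \card D_n\} < \kappa$, contradicting $\card D \geq \kappa$. Hence there is some $m < \omega$ with $\card D_m \geq \kappa$.

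It then remains only to observe that $D_m$ is the set we want. Being $1/m$-separated, for each $x \in D_m$ the ball $B_d(x,1/(2m))$ meets $D_m$ only in $x$, so every point of $D_m$ is isolated in $D_m$; thus $D_m$ is a discrete subset of $A$ (in fact uniformly, hence closed and discrete, in $X$) with $\card D_m \geq \kappa$.

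The only genuine obstacle is the cardinal-arithmetic step in the second paragraph; everything else is the routine maximal-net construction. It is worth stressing that this step is precisely where the hypothesis is indispensable: for $\kappa$ of countable cofinality the passage from a large countable union to a large single piece breaks down, so the assumption of uncountable cofinality cannot simply be dropped.
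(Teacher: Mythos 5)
Your proof is correct and is essentially the argument the paper itself invokes by reference (the paper gives no written proof, merely pointing to the proof of \cite[Lemma~6.2]{BZ}): a maximal $1/n$-separated net $D_n$ is $1/n$-dense in $A$ by maximality, the union $\bigcup_{n<\omega} D_n$ is dense and hence of cardinality $\geq \kappa$, and uncountable cofinality pushes cardinality $\geq\kappa$ onto a single $D_m$, which is (even closed and) discrete. All steps, including the Zorn's Lemma construction and the cardinal-arithmetic use of $\operatorname{cf}(\kappa)>\omega$, check out.
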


Now, we get the following:

\begin{prop}\label{dcp}
Let $X$ be locally path-connected and nowhere locally compact.
Suppose that any neighborhood of each point in $X$ is of density $\geq \kappa$.
Then the hyperspace $\cpt(X)$ has the $\kappa$-discrete $n$-cells property for every $n < \omega$.
\end{prop}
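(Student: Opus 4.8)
The plan is to verify the $\kappa$-discrete $n$-cells property for $\cpt(X)$ by reducing to the locally finite version via Lemma~\ref{loc.fin.}, restricting to cardinals of uncountable cofinality via Lemma~\ref{cof.}, and then exploiting the structure of $\cpt(X)$ together with the nowhere local compactness of $X$ to spread out the images of the cells. Since Proposition~\ref{lfap} already gives the countable locally finite approximation property, by Lemma~\ref{cof.} it suffices to establish the $\lambda$-discrete $n$-cells property for each $\lambda \leq \kappa$ of uncountable cofinality, and by Lemma~\ref{loc.fin.} it is enough to make the family of images $\{g(A_\gamma)\}_{\gamma < \lambda}$ locally finite rather than discrete.

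\medskip

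So suppose I am given an open cover $\mathcal{U}$ of $\cpt(X)$ and a map $f : \bigoplus_{\gamma < \lambda} A_\gamma \to \cpt(X)$ with each $A_\gamma = \I^n$. First I would fix an admissible metric $d$ on $X$ and choose a map $\alpha : \cpt(X) \to (0,1)$ so that $\{B_{d_H}(C,\alpha(C)) \mid C \in \cpt(X)\}$ refines $\mathcal{U}$, and I would use Lemma~\ref{arc} to produce $\beta : \cpt(X) \to (0,1)$ controlling the diameters of connecting arcs, just as in Proposition~\ref{lfap}. The key idea is that the whole cell $A_\gamma$ must be moved by attaching, to each $f(y)$, a point from a ``fresh'' locally finite reservoir so that distinct cells land in a locally finite family. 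Since each $A_\gamma$ is connected and compact, $f(A_\gamma)$ is a compact connected subset of $\cpt(X)$, and I can choose a single representative value $C_\gamma = f(y_\gamma)$; because the neighborhood density hypothesis plus Lemma~\ref{discr.} guarantees a discrete set of cardinality $\geq \lambda$ near any point, I can select for each $\gamma$ a point $w_\gamma \in X$ close to some point of $C_\gamma$, drawn from a locally finite discrete reservoir in $X$ and chosen so that the $w_\gamma$ are pairwise distinct across $\gamma < \lambda$. Using Lemma~\ref{arc} to connect $w_\gamma$ to $C_\gamma$ by short arcs, I would homotope $f|_{A_\gamma}$ within an $\alpha(C_\gamma)$-ball to a map $g|_{A_\gamma}$ whose every value contains $w_\gamma$, exactly mimicking the skeleton-by-skeleton union construction in Proposition~\ref{lfap}.

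\medskip

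To see that $\{g(A_\gamma)\}_{\gamma < \lambda}$ is then locally finite, I would argue by contradiction as in the last part of Proposition~\ref{lfap}: if some sequence $g(y_i)$ with $y_i \in A_{\gamma_i}$, $\gamma_i$ distinct, converged to some $A \in \cpt(X)$, then since $g(y_i) \supset f(y_i)$ and $w_{\gamma_i} \in g(y_i)$, Lemma~\ref{subseq.} would force the points $w_{\gamma_i}$ to accumulate in $A$, contradicting the local finiteness of the reservoir once I know the reservoir indices diverge. The main obstacle I anticipate is the uniform selection of the reservoir points across all $\lambda$-many cells simultaneously: I need the chosen $w_\gamma$ to be both pairwise distinct and drawn from a single locally finite (indeed discrete) subset of $X$, while remaining close to the moving sets $C_\gamma$. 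Here the restriction to uncountable cofinality is essential, since Lemma~\ref{discr.} converts the density hypothesis into a discrete set of the required size, and the nowhere local compactness of $X$ lets me thin out and shift these points—as in the inductive disjointification in Proposition~\ref{lfap}—so that they form a discrete family indexed by $\gamma$. Assembling $g = \bigoplus_{\gamma < \lambda} g|_{A_\gamma}$ then yields the map required by Lemma~\ref{loc.fin.}, completing the proof.
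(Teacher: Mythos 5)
Your scaffolding matches the paper's proof exactly: reduce to uncountable cofinality via Lemma~\ref{cof.} (using Proposition~\ref{lfap}), weaken discreteness to local finiteness via Lemma~\ref{loc.fin.}, build graded locally finite reservoirs $Z(m) = \bigcup_{\emptyset \neq V \in \mathcal{V}_m} Z(V)$ from Lemma~\ref{discr.}, and conclude by contradiction using Lemma~\ref{subseq.}. But your central construction step has a genuine gap: you attach a \emph{single} point $w_\gamma$, chosen near one representative value $C_\gamma = f(y_\gamma)$, and require \emph{every} value of $g|_{A_\gamma}$ to contain $w_\gamma$. Since $w_\gamma \in g(y)$ forces $d_H(f(y),g(y)) \geq d(w_\gamma,f(y))$, and the image $f(A_\gamma)$ can have diameter in $\cpt(X)$ that is huge compared with the values of $\alpha$ along it (the cover $\mathcal{U}$ is arbitrary, which is why $\alpha$ must be a function rather than a constant), the resulting $g|_{A_\gamma}$ is in general not $\mathcal{U}$-close to $f|_{A_\gamma}$; controlling the homotopy ``within an $\alpha(C_\gamma)$-ball'' says nothing at parameters $y$ where $f(y)$ is far from $C_\gamma$. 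This is precisely why the paper instead triangulates each cell $A_\gamma$ into a complex $K_\gamma$ satisfying conditions (1)--(3) of Proposition~\ref{lfap} (via Lemmas~\ref{subd.1} and \ref{subd.2}) and attaches a \emph{different} reservoir point $z^\gamma(v)$ to each vertex $v$, at a scale $m_v$ determined by the local value $\beta g(v)$; the skeleton-by-skeleton extension by arcs and finite unions then guarantees only that each value $g_\gamma(y)$ contains $z^\gamma(v)$ for \emph{some} vertex $v$ of the carrier of $y$, keeping the perturbation simplex-wise small. You cannot ``exactly mimic'' that construction while insisting on one common point per cell.

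The single-point choice also breaks your concluding contradiction argument. In the paper's version, if $g_{\gamma_i}(y_i)$ converges to $A$, then Lemma~\ref{subseq.} yields (along a subsequence) $g(y_i) \to B \subset A$, hence $\beta g(y_i) \to \beta(B) > 0$ --- this is the actual role of Lemma~\ref{subseq.}, which you instead invoke only to get accumulation of the attached points, something that already follows from compactness. The dichotomy then runs: if the scales $m_{v_i}$ stay bounded, infinitely many pairwise distinct points of a single locally finite set $Z(m_0)$ accumulate, a contradiction; if they diverge, then $\beta g(v_i) \to 0$, and condition (2) --- the oscillation bound on $\beta \circ g$ over each simplex --- transfers this to $\beta g(y_i) \to 0$, contradicting $\beta g(y_i) \to \beta(B) > 0$. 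Your phrase ``contradicting the local finiteness of the reservoir once I know the reservoir indices diverge'' has this backwards: when the indices diverge, the points come from different $Z(m)$'s, whose union over all $m$ is \emph{not} locally finite, so accumulation is no contradiction there; that horn requires the $\beta$-scale argument. Moreover, with one representative per cell, the scale of $w_{\gamma_i}$ is tied to $\beta(C_{\gamma_i}) = \beta f(y_{\gamma_i})$ at a parameter point $y_{\gamma_i}$ unrelated to $y_i$, and nothing links $\beta(C_{\gamma_i})$ to $\beta g(y_i)$, so neither horn closes. The vertex-wise attachment together with condition (2) is the missing idea on both fronts.
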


\begin{proof}
According to Proposition~\ref{lfap}, the hyperspace $\cpt(X)$ has the countable locally finite approximation property.
Hence Lemma~\ref{cof.} guarantees that we may only consider $\kappa$ be of uncountable cofinality.
Let $n < \omega$ and $\mathcal{V}$ be an open cover of $\cpt(X)$.
By virtue of Lemma~\ref{loc.fin.}, it is sufficient to show that for any map $g : \bigoplus_{\gamma < \kappa} A_\gamma \to \cpt(X)$, where each $A_\gamma = \I^n$,
 there is a map $g_\gamma : A_\gamma \to \cpt(X)$, $\gamma < \kappa$, such that $g_\gamma$ is $\mathcal{V}$-close to $g|_{A_\gamma}$ and $\{g_\gamma(A_\gamma)\}_{\gamma < \kappa}$ is locally finite in $\cpt(X)$.

Take an admissible metric $d$ on $X$ and the same maps $\alpha, \beta : \cpt(X) \to (0,1)$ as in Proposition~\ref{lfap}.
Moreover, combining Lemmas~\ref{subd.1} with \ref{subd.2}, we can triangulate each $A_\gamma$ into a simplicial complex $K_\gamma$ satisfying the same conditions (1), (2) and (3) as in Proposition~\ref{lfap}.
For each $m < \omega$, choose a locally finite open cover $\mathcal{V}_m$ of $X$ of mesh $< 1/m$.
By the assumption and Lemma~\ref{discr.}, for each $m < \omega$ and each non-empty $V \in \mathcal{V}_m$, there is a discrete subset $Z(V) \subset V$ of cardinality $\geq \kappa$.
By the local finiteness of $\mathcal{V}_m$, the subset $Z(m) = \bigcup_{\emptyset \neq V \in \mathcal{V}_m} Z(V)$ is locally finite in $X$.

We construct the desired map $g_\gamma$ for each $\gamma < \kappa$.
For every $v \in K_\gamma^{(0)}$, there is $m_v \geq 2$ such that $1/m_v < \beta g(v)/4 \leq 1/(m_v - 1)$.
Then we can choose a point $z^\gamma(v) \in Z(m_v)$, $\gamma < \kappa$, so that $d(z^\gamma(v),g_\gamma(v)) < 1/m_v$ and for any $v', v'' \in \bigcup_{\gamma < \kappa} K_\gamma^{(0)}$ with $m_{v'} = m_{v''}$, $z^\gamma(v') \neq z^{\gamma'}(v'')$ if $\gamma < \gamma' < \kappa$,
 where we may adjust each $Z(m_v)$ as in the proof of Proposition~\ref{lfap} if necessary.
Define $g_\gamma(v) = g(v) \cup \{z^\gamma(v)\} \in \cpt(X)$.
The rest of the proof follows from the same argument as in Proposition~\ref{lfap}.
\end{proof}

\section{Proof of the main theorem}

In this final section, we shall prove the main theorem.

\begin{proof}[Proof of the main theorem]
In the case that $X$ is separable,
 the proof follows from Theorem~\ref{l2}.
Let $\kappa$ be uncountable.

(The ``only if'' part)~According to Propositions~\ref{dens.}, \ref{compl.} and \ref{ar}, the hyperspace $\cpt(X)$ is a topologically complete AR of density $\kappa$.
Since $X$ is connected, locally connected and topologically complete,
 it is locally path-connected.
Due to Proposition~\ref{lfap}, $\cpt(X)$ has the countable locally finite approximation property.
Moreover, the $\kappa$-discrete $n$-cells property, $n < \omega$, of $\cpt(X)$ follows from Proposition~\ref{dcp}.
Using Toru\'nczyk's characterization~\ref{DCP-char.}, we have that $\cpt(X)$ is homeomorphic to $\ell_2(\kappa)$.

(The ``if'' part)~Since $\cpt(X)$ is homeomorphic to $\ell_2(\kappa)$,
 it is a topologically complete AR,
 and hence, by Propositions~\ref{compl.} and \ref{ar}, $X$ is connected, locally connected and topologically complete.
Remark that for each $A \in \cpt(X)$, all neighborhoods of $A$ are of density $\kappa$.
It follows from Proposition~\ref{nbd.dens.} that any neighborhood of each point in $X$ is also of density $\kappa$,
 and hence $X$ is nowhere locally compact.
Thus the proof is complete.
\end{proof}

\section{Pair of hyperspaces}

In this final section, we will discuss the topological structure of pair of hyperspaces.
A subset $A$ of a space $X$ is said to be \textit{homotopy dense} in $X$ if there exists a homotopy $h : X \times \I \to X$ such that $h(x,0) = x$ for all $x \in X$ and $h(X \times (0,1]) \subset A$.
To show Corollary~\ref{pair}, we will use the following characterization of the pair $(\ell_2(\kappa),\ell_2^f(\kappa))$ \cite{We1,Kos1}:

\begin{thm}\label{char.pair}
A pair $(X,Y)$ of spaces is homeomorphic to $(\ell_2(\kappa),\ell_2^f(\kappa))$ if and only if $X$ is homeomorphic to $\ell_2(\kappa)$, $Y$ is homeomorphic to $\ell_2^f(\kappa)$ and $Y$ is homotopy dense in $X$.
\end{thm}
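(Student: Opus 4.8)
The plan is to deduce the characterization from the theory of (non-separable) absorbing sets, treating the two implications separately; the forward implication is routine, and the reverse implication carries the real content.

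For the ``only if'' part, suppose $(X,Y)$ is homeomorphic to $(\ell_2(\kappa),\ell_2^f(\kappa))$ by a homeomorphism of pairs $f$. Then $X \cong \ell_2(\kappa)$ and $Y \cong \ell_2^f(\kappa)$ are immediate. Since homotopy density is preserved under homeomorphisms of pairs (transport the witnessing homotopy through $f$), it remains only to check that $\ell_2^f(\kappa)$ is homotopy dense in $\ell_2(\kappa)$. I would prove this by exhibiting, for each $\epsilon > 0$, a map $r_\epsilon : \ell_2(\kappa) \to \ell_2^f(\kappa)$ that is $\epsilon$-close to the identity: take a locally finite open cover of $\ell_2(\kappa)$ by $\epsilon$-balls, choose their centers in the dense subspace $\ell_2^f(\kappa)$, and let $r_\epsilon$ be the associated partition-of-unity map, whose values are finite convex combinations of finitely supported vectors and hence lie in $\ell_2^f(\kappa)$. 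The standard fact that a dense subset of an ANR admitting arbitrarily close approximating maps is homotopy dense then applies.

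For the ``if'' part I would argue through intrinsic absorbing-set invariants. Recall from \cite{We1,Kos1} that $\ell_2^f(\kappa)$ is a $\mathcal{C}$-absorbing set in $\ell_2(\kappa)$, where $\mathcal{C}$ is the class for which $\ell_2^f(\kappa)$ is the model absorbing set, so that $\sigma\mathcal{C}$ is the class of strongly countable-dimensional, $\sigma$-locally compact spaces of density $\leq \kappa$ appearing in the theorem of \cite{Kos8} quoted above. The defining conditions of being a $\mathcal{C}$-absorbing set, namely membership in $\sigma\mathcal{C}$, strong $\mathcal{C}$-universality, and being a $\mathcal{Z}_\sigma$-space (a countable union of its own $Z$-sets), are \emph{intrinsic} topological properties of the underlying space. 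Hence, transporting them through the homeomorphism $Y \cong \ell_2^f(\kappa)$, the space $Y$ is itself a $\mathcal{C}$-absorbing set. Now fix a homeomorphism $\phi : X \to \ell_2(\kappa)$; since $Y$ is homotopy dense in $X$, its image $\phi(Y)$ is a homotopy dense $\mathcal{C}$-absorbing set in $\ell_2(\kappa)$, and $\ell_2^f(\kappa)$ is another such set. The uniqueness theorem for homotopy dense absorbing sets (the non-separable analogue of the Bestvina--Mogilski uniqueness theorem, supplied by \cite{We1,Kos1}) then produces a homeomorphism $h : \ell_2(\kappa) \to \ell_2(\kappa)$ with $h(\phi(Y)) = \ell_2^f(\kappa)$, and the composite $h\phi$ is the desired homeomorphism of pairs.

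The main obstacle is the uniqueness theorem invoked in the final step: one must carry out the $Z$-set unknotting and absorption argument over an index set of uncountable cardinality $\kappa$, where the separable technique of assembling a single ambient homeomorphism as a limit of locally controlled adjustments must be replaced by constructions that keep families indexed by $\kappa$ discrete or locally finite, precisely the kind of control embodied in the $\kappa$-discrete $n$-cells property used elsewhere in this paper. A secondary point requiring care is confirming that the intrinsic conditions, especially strong $\mathcal{C}$-universality for the non-separable class $\mathcal{C}$, really do transfer under an abstract homeomorphism, and that homotopy density is exactly the extra ingredient that promotes the abstract isomorphism $Y \cong \ell_2^f(\kappa)$ to an ambient one; both of these are the heart of the cited results.
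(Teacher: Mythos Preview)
The paper does not supply its own proof of this theorem; it is quoted as a known result from \cite{We1,Kos1} and used as a black box in the proof of Corollary~\ref{pair}. So there is no in-paper argument to compare against, and your outline should be measured against the content of those references.

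Your sketch is in line with how those sources proceed: the separable case is West's ambient homeomorphy theorem, and the non-separable extension in \cite{Kos1} is built on the absorbing-set machinery you invoke. The forward implication is indeed routine once one checks $\ell_2^f(\kappa)$ is homotopy dense in $\ell_2(\kappa)$. For the reverse implication your high-level strategy---transport the intrinsic absorbing-set invariants of $\ell_2^f(\kappa)$ to $Y$, push $Y$ into $\ell_2(\kappa)$ via a homeomorphism $X\to\ell_2(\kappa)$, then apply uniqueness of homotopy-dense absorbing sets---is the standard one. One point deserves a sharper statement: being a $Z_\sigma$-set is a \emph{relative} notion, so what transfers through the homeomorphism $Y\cong\ell_2^f(\kappa)$ is only that $Y$ is a $Z_\sigma$-space in itself; the step promoting this to ``$\phi(Y)$ is $Z_\sigma$ in $\ell_2(\kappa)$'' genuinely uses the homotopy density of $\phi(Y)$ together with the fact that $\ell_2(\kappa)$ has the discrete approximation property (so that a $Z$-set in a homotopy-dense subspace is a $Z$-set in the ambient space). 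You allude to this in your final paragraph, but it should be stated as an explicit lemma rather than folded into the phrase ``intrinsic.'' With that clarification your outline is correct and matches the cited literature.
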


We denote the $n$-dimensional unit sphere by $\sph^n$ and the $n$-dimensional unit ball by $\ball^n$.
The homotopy density between ANRs is characterized as follows \cite[Corollary~7.4.6]{Sa6}:

\begin{lem}\label{homot.dens.}
Suppose that $X$ and $Y$ are ANRs and $Y$ is dense in $X$.
Then $Y$ is homotopy dense in $X$ if and only if the following condition holds:
\begin{itemize}
 \item For each point $x \in X$ and each neighborhood $U$ of $x$ in $X$, there is a neighborhood $V \subset U$ of $x$ such that any map $f : \sph^n \to V \cap Y$ can extend to a map $\tilde{f} : \ball^{n + 1} \to U \cap Y$ for all $n < \omega$.
\end{itemize}
\end{lem}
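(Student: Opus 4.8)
The plan is to prove the two implications separately, the substantive one being that the local extension condition implies homotopy density; the converse is a routine consequence of the continuity of the pushing homotopy together with the local contractibility of the ANR $Y$.

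For the ``only if'' part, suppose $Y$ is homotopy dense, witnessed by $h : X \times \I \to X$ with $h_0 = \id_X$ and $h(X \times (0,1]) \subset Y$. Fix $x \in X$ and a neighborhood $U$ of $x$. From $h(x,0) = x \in U$ and continuity I obtain a neighborhood $U_0 \subset U$ of $x$ and $\delta > 0$ with $h(U_0 \times [0,\delta]) \subset U$. Since $h(x,\delta) \in U \cap Y$ and $Y$ is an ANR, there is a neighborhood $N$ of $h(x,\delta)$ in $Y$ contractible in $U \cap Y$; shrinking $U_0$ to $V$, I can guarantee $h((V \cap Y) \times \{\delta\}) \subset N$. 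Given $f : \sph^n \to V \cap Y$, I would define $\tilde f : \ball^{n+1} \to U \cap Y$ on the outer collar $\{wr : w \in \sph^n,\ 1/2 \le r \le 1\}$ by $\tilde f(wr) = h(f(w),2\delta(1-r))$ --- so $\tilde f|_{\sph^n} = f$ and $\tilde f$ maps into $U \cap Y$ for $r < 1$ --- and on the inner ball $\{|z| \le 1/2\}$ by extending the boundary value $w \mapsto h(f(w),\delta)$, whose image lies in $N$, over the ball into $U \cap Y$. Contractibility of $N$ makes this work for all $n$ with the single neighborhood $V$.

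For the ``if'' part, the crux is to produce, for each open cover $\mathcal{U}$ of $X$, a map $r : X \to Y$ that is $\mathcal{U}$-close to $\id_X$. Since $X$ is an ANR, as in the proof of Proposition~\ref{lfap} there are a simplicial complex $K$ and maps $\phi : X \to |K|$, $\psi : |K| \to X$ with $\psi\phi$ being $\mathcal{U}$-close to $\id_X$ (cf.~\cite[Theorem~6.6.2]{Sa6}). Fixing an admissible metric on $X$ and subdividing $K$ by Lemma~\ref{subd.1}, I may assume each $\psi(\sigma)$ is so small that it lies in a set $V$ furnished by the hypothesis, controlled by a member $U$ of a star-refinement of $\mathcal{U}$. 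I would then replace $\psi$ by a map $\psi' : |K| \to Y$ built inductively over skeleta: each vertex is sent to a nearby point of $Y$, using density of $Y$, and $\psi'$ is extended over each $j$-simplex by applying the hypothesis with $n = j - 1$ to its boundary sphere, keeping the image inside the controlling $U \cap Y$. The composite $r = \psi'\phi : X \to Y$ is then $\mathcal{U}$-close to $\id_X$. To assemble the homotopy, fix a sequence $\mathcal{U}_1 \succ \mathcal{U}_2 \succ \cdots$ of open covers whose meshes shrink to points, and let $r_k : X \to Y$ be the approximation associated with $\mathcal{U}_k$. Consecutive maps $r_k$ and $r_{k+1}$ are $\mathcal{U}_k$-close and both land in $Y$; since $Y$ is an ANR, sufficiently close maps into $Y$ are joined by a homotopy in $Y$ with small tracks, giving $R_k : r_k \simeq r_{k+1}$ inside $Y$. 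I define $h : X \times \I \to X$ by $h_0 = \id_X$ and by running a reparametrization of $R_k$ on $X \times [1/(k+1),1/k]$, so that $h_{1/k} = r_k$. For each $t \in (0,1]$ the slice $h_t$ lies in $Y$ because every $R_k$ does, and continuity at $t = 0$ holds because, for small $t$, $h_t$ is $\mathcal{U}_k$-close to $r_k$, hence to $\id_X$, while the $\mathcal{U}_k$ collapse to points. Thus $h$ exhibits $Y$ as homotopy dense in $X$.

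The main obstacle is the inductive construction of $\psi'$: I must choose the subdivision and the nested sets $\psi(\sigma) \subset V \subset U$ so that extending $\psi'$ over the higher skeleta never escapes the member of $\mathcal{U}$ controlling each simplex, which is what guarantees that $r = \psi'\phi$ is genuinely $\mathcal{U}$-close to $\id_X$. Because $K$ may be infinite-dimensional, this control must be uniform across all dimensions, and it is precisely here that the hypothesis is consumed, once for every $n < \omega$. The remaining verifications --- continuity of $\psi'$, of the maps $r_k$, and of the patched homotopy $h$ at $t = 0$ --- are routine given local finiteness and the collapsing meshes.
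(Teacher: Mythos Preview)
The paper does not prove this lemma at all: it is quoted verbatim as \cite[Corollary~7.4.6]{Sa6} and used as a black box in the proof of Corollary~\ref{pair}. So there is no ``paper's own proof'' to compare your attempt against.

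On the substance of your sketch: the ``only if'' direction is fine. For the ``if'' direction your overall strategy --- factor $\id_X$ through a nerve $|K|$, push $\psi$ into $Y$ skeleton by skeleton using the hypothesis, and then splice the resulting approximations $r_k$ into a homotopy --- is the standard one and does work, but the obstacle you flag is genuine and your proposed resolution (``this control must be uniform across all dimensions'') is not quite the fix. The hypothesis only trades one level $V \subset U$ at a time, so extending over a $j$-simplex costs $j$ iterations of the condition; there is no single $V$ that works for all $n$. What saves the argument is that the nerve $K$ of a \emph{locally finite} open cover of the metrizable space $X$ is locally finite-dimensional: each vertex lies in simplices of bounded dimension, so at each vertex $v$ you only need a finite tower $U \supset V_1 \supset \cdots \supset V_{d(v)}$, with $d(v)$ the local dimension near $v$, and you place $\psi'(v)$ in $V_{d(v)} \cap Y$. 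The depth varies over $|K|$, not uniformly. With that adjustment (and a routine check that the $R_k$'s can be taken with tracks small enough to make $h$ continuous at $t=0$, which follows from the controlled homotopy extension property of the ANR $Y$), your plan goes through.
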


Using this lemma, we shall prove Corollary~\ref{pair}:

\begin{proof}[Proof of Corollary~\ref{pair}]
The main theorems of this paper and the paper \cite{Kos8} guarantee that $\cpt(\overline{X})$ is homeomorphic to $\ell_2(\kappa)$ and $\fin(X)$ is homeomorphic to $\ell_2^f(\kappa)$.
By virtue of Theorem~\ref{char.pair}, it remains to prove that $\fin(X)$ is homotopy dense in $\cpt(\overline{X})$ if and only if $\overline{X} \setminus X$ is locally non-separating in $\overline{X}$.

The ``only if'' part follows from the same argument as the proof of implication $({\rm ii}) \Rightarrow ({\rm iii})$ in \cite[Theorem~3.2]{Cu4}.
We shall prove the ``if'' part.
As is easily observed,
 $\fin(X)$ is dense in $\cpt(\overline{X})$.
Due to Lemma~\ref{homot.dens.}, we need only to show that for each point $A \in \cpt(\overline{X})$ and each neighborhood $\mathcal{U}$ of $A$ in $\cpt(\overline{X})$, there is a neighborhood $\mathcal{V} \subset \mathcal{U}$ of $A$ such that any map $f : \sph^n \to \mathcal{V} \cap \fin(X)$ can extend to a map $\tilde{f} : \ball^{n + 1} \to \mathcal{U} \cap \fin(X)$ for all $n < \omega$.
Let $A \in \cpt(\overline{X})$ and $\mathcal{U}$ be a neighborhood of $A$ in $\cpt(\overline{X})$.
By the local connectedness of $\overline{X}$ and the compactness of $A$, there exist a finite number of points $a_1, \cdots, a_n \in A$ and connected open neighborhoods $U_i$ of $a_i$, $1 \leq i \leq n$, such that $A \subset \bigcup_{i = 1}^n U_i$ and for any $B \in \cpt(\overline{X})$, $B \in \mathcal{U}$ if $B \subset \bigcup_{i = 1}^n U_i$ and $B \cap U_i \neq \emptyset$ for all $i \in \{1, \cdots, n\}$.
Let
 $$\mathcal{V} = \bigg\{B \in \cpt(\overline{X}) \biggm| B \subset \bigcup_{i = 1}^n U_i \text{ and } B \cap U_i \neq \emptyset \text{ for every } 1 \leq i \leq n\bigg\}.$$
To show that $\mathcal{V}$ is the desired neighborhood of $A$, fix an arbitrary map $f : \sph^n \to \mathcal{V} \cap \fin(X)$.

(1)~$n = 0$.
Note that $\sph^0 = \{0,1\}$.
It suffices to construct a path between $f(0)$ and $f(1)$ in $\mathcal{V} \cap \fin(X)$.
Since $f(0), f(1) \in \mathcal{V} \cap \fin(X)$,
 for each $x \in f(1)$, we can choose $i(x) \in \{1, \cdots, n\}$ and $y(x) \in f(0)$ so that $x, y(x) \in U_{i(x)} \cap X$.
Since $\overline{X} \setminus X$ is locally non-separating in $\overline{X}$,
 $U_{i(x)} \cap X$ is connected.
Hence the open subset $U_{i(x)} \cap X$ is path-connected because $X$ is locally path-connected.
So there is a path $\gamma(x)$ from $y(x)$ to $x$ in $U_{i(x)} \cap X$.
Then we can define a path $\phi : \I \to \fin(X)$ from $f(0)$ to $f(0) \cup f(1)$ by $\phi(t) = f(0) \cup \{\gamma(x)(t) \mid x \in f(1)\}$.
Observe that $\phi(\I) \subset \mathcal{V}$.
By the same argument, we can obtain a path $\psi : \I \to \mathcal{V} \cap \fin(X)$ from $f(1)$ to $f(0) \cup f(1)$.
Join the paths $\phi$ and $\psi$,
 so we can take a path between $f(0)$ and $f(1)$ that is contained in $\mathcal{V} \cap \fin(X)$.

(2)~$n \geq 1$.
The map $f$ induces $\bar{f} : \fin(\sph^n) \to \fin(X)$ defined by $\bar{f}(B) = \bigcup_{b \in B} f(b)$.
Due to Lemma~3.3 of \cite{CN}, there exists a map $r : \ball^{n + 1} \to \fin(\sph^n)$ such that $r(y) = \{y\}$ for all $y \in \sph^n$.
Let $\tilde{f} = \bar{f}r$,
 that is the desired extension of $f$.
Indeed, for each $x \in \sph^n$, we have
 $$\tilde{f}(x) = \bar{f}r(x) = \bar{f}(\{x\}) = f(x).$$
Moreover, for every $x \in \ball^{n + 1}$, it follows that
 $$\tilde{f}(x) = \bar{f}r(x) = \bigcup_{b \in r(x)} f(b) \in \mathcal{V} \cap \fin(X)$$
 because each $f(b) \in \mathcal{V} \cap \fin(X)$.
Consequently, $\mathcal{V}$ is the desired neighborhood,
 so $\fin(X)$ is homotopy dense in $\cpt(\overline{X})$.
The proof is complete.
\end{proof}

\end{document}